\documentclass[12pt,a4paper,dvipsnames]{article}

\addtolength{\textwidth}{2cm}
\addtolength{\oddsidemargin}{-1cm}
\addtolength{\textheight}{2.2cm}
\addtolength{\topmargin}{-1cm}

\usepackage{hyperref}
\usepackage{pgf}
\usepackage[utf8]{inputenc}

\usepackage{mathtools}
\usepackage{amsfonts}
\usepackage{amssymb}
\usepackage{amsthm}
\usepackage{dsfont}
\usepackage[linesnumbered]{algorithm2e}
\usepackage{setspace}
\usepackage{extarrows}

\usepackage{graphicx}
\usepackage{multicol}
\PassOptionsToPackage{dvipsnames}{xcolor}
\usepackage{tikz,xcolor}

\usetikzlibrary{shapes,arrows}
\usetikzlibrary{hobby}

\usepackage[english]{babel}		
\usepackage[T1]{fontenc}			
\usepackage{color}         			
\usepackage{listings}					

\renewcommand{\subsectionmark}[1]{}

\newcommand{\dotcup}{\ensuremath{\mathaccent\cdot\cup}}

\usepackage{geometry}
\geometry{a4paper, top=30mm, left=24mm, right=24mm, bottom=30mm,
headsep=10mm, footskip=10mm}

\newcommand*\dif{\mathop{}\!\mathrm{d}}

\newtheorem{theorem}{Theorem}[section]
\newtheorem{lemma}[theorem]{Lemma}

\newtheorem{proposition}[theorem]{Proposition}

\newtheorem{corollary}[theorem]{Corollary}
\newtheorem{remark}[theorem]{Remark}
%

\renewcommand{\P}{\mathbb{P}}

\newcommand{\N}{\mathbb{N}}
\newcommand{\R}{\mathbb{R}}
\newcommand{\E}{\mathbb{E}}

\newcommand\abs[1]{\left| #1\right|}


\title{Limit theorems for the tagged particle in exclusion processes on regular trees}
\author{
Dayue Chen$^\ast$, Peng Chen$^\ast$, 
Nina Gantert$^{\ast\ast}$ and Dominik Schmid$^{\ast\ast}$}  
\date{\today}

\begin{document}


\maketitle

\abstract{We consider exclusion processes on a rooted $d$-regular tree. We start from a Bernoulli product measure conditioned on having a particle at the root, which we call the tagged particle. For $d\geq 3$, we show that the tagged particle has positive linear speed and satisfies a central limit theorem. We give an explicit formula for the speed. As a key step in the proof, we first show that the exclusion process ``seen from the tagged particle'' has an ergodic invariant measure.}

\phantom{.} \hspace{0.2cm} \textbf{Keywords:} Exclusion process, regular tree, tagged particle, ergodicity \\
\phantom{.} \hspace{0.5cm} \textbf{AMS 2000 subject classification:} 60K35

\let\thefootnote\relax
\footnotetext{ $^\ast$ \textit{Peking University, China. E-Mail}: \nolinkurl{dayue@pku.edu.cn}, \nolinkurl{chenpeng@cufe.edu.cn} \\\phantom{.} \hspace{0.3cm}
$^{\ast\ast}$ \textit{TU München, Germany. E-Mail}: \nolinkurl{nina.gantert@tum.de}, \nolinkurl{dominik.schmid@tum.de}}

\section{Introduction}

The simple exclusion process is one of the most studied examples of an interacting particle system.
It can model a variety of systems as for instance cars in a traffic jam or molecules in a low-density gas.
The exclusion process is given by a locally finite graph and a set of indistinguishable particles, which we initially place on distinct sites of the graph. Each particle independently performs a simple random walk.
However, when a particle would move to an occupied site, the move is suppressed.
In the following, we assume that the graph has a root on which we initially place a particle.
We then follow the evolution of this tagged particle. Our goal is to study its distance from the root.
We will focus on the case where the underlying graph is a rooted $d$-regular tree for $d\geq 2$ and establish a law of large numbers with respect to the shortest path distance from the root. Moreover, for $d \geq 3$, we show that a central limit theorem holds.

\subsection{The model}

For $d\in \N$ with $d\geq 2$, let $T^d=(V,E)$ denote the $\boldsymbol{d}$\textbf{-regular tree} with a distinguished site $o$ which we call the \textbf{root}. Consider transition rates $\tilde{p}(x,y)\in \R_0^+$ for all $x,y \in V$, and assume they are symmetric, translation invariant, irreducible and of finite range. In particular, for all sites $x,y \in V$, the transition rates have to satisfy $\tilde{p}(x,y)=p(|x-y|)$ for some function $p:\N_0 \rightarrow \R^+_0$ of finite support, where $|x-y|$ denotes the shortest path distance of $x$ and $y$ in $T^d$.
We define the \textbf{exclusion process on} $\boldsymbol{T^d}$ to be the Feller process $(\eta_t)_{t \geq 0}$ with state space $\{0,1\}^V$ generated by the closure of
\begin{align}\label{def:generatorEta}
\mathcal{L}f(\eta) &= \sum_{x,y \in V} \tilde{p}(x,y)\  \eta(x)(1-\eta(y))\left[ f(\eta^{x,y})-f(\eta) \right] \, .
\end{align}
Here, $\eta^{x,y}$ denotes the configuration where we exchange the values at positions $x$ and $y$ in $\eta$. In the particular case where $\tilde{p}(x,y)= d^{-1}\mathds{1}_{\{ x,y \} \in E }$ holds for all $x,y \in V$, we call $(\eta_t)_{t \geq 0}$ the \textbf{simple exclusion process on} $\boldsymbol{T^d}$.
Since by our assumptions
\begin{equation*}
\sup_{y \in V} \sum_{x \in V} \tilde{p}(x,y) < \infty
\end{equation*}
holds, \eqref{def:generatorEta} indeed gives rise to a Feller process, see \cite[Theorem 3.9]{liggett1985interacting}.  
For a given configuration $\eta$, we say that a site $x$ is \textbf{occupied} if $\eta(x)=1$ and \textbf{vacant} otherwise. \\
For each $\rho \in [0, 1]$, let $\nu_{\rho}$ denote the Bernoulli-$\rho$-product measure on $\{0, 1\}^V$ with marginal distributions
\begin{equation*}
\nu_{\rho}\left( \eta \colon \eta(x)=1 \right) = \rho
\end{equation*} for all $x \in V$. Under the above assumptions, the collection $\left\lbrace \nu_{\rho} \colon \rho \in [0,1] \right\rbrace$ is exactly the set of extremal invariant measures of $(\eta_t)_{t \geq 0}$, see \cite[III, Theorem 1.10]{liggett1999stochastic}. We define 
\begin{equation*}
\nu_{\rho}^{\ast}( \ . \ ) := \nu_{\rho}\left( \ . \ \mid   \eta(o)=1 \right) 
\end{equation*} to be the Bernoulli-$\rho$-product measure where we condition on having a particle at the root. We call $\nu_{\rho}^{\ast}$ the \textbf{Palm measure} with parameter $\rho$. The particle initially placed on the root is called the \textbf{tagged particle}. We follow the evolution of the tagged particle over time and denote by $(X_t)_{t\geq 0}$ its position. 

\subsection{Main results}\label{sec:main}

Our main result is to establish a law of large numbers for the position of the tagged particle $(X_t)_{t\geq 0}$. In the following, we write $|z|:=|z-o|$ for all $z \in V$. 
\begin{theorem}\label{thm:LLN} For $d\geq 2$, let $(\eta_t)_{t\geq 0}$ on $T^d$ have initial distribution $\nu_{\rho}^{\ast}$ for some $\rho \in [0,1]$. Then the position of the tagged particle $(X_t)_{t\geq 0}$ satisfies a law of large numbers:
\begin{equation*}
\lim_{t \rightarrow \infty} \frac{|X_t|}{t} = (1-\rho)(d-2)\sum_{i \in \N_0}i p(i) =: v
\end{equation*} 
$\P_{\nu_{\rho}^{\ast}}$-almost surely. In particular, we have a speed of $(1-\rho)\frac{d-2}{d}$ in the case of the simple exclusion process on $T^d$.
\end{theorem}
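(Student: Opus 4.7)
The plan is to carry out the tagged-particle law of large numbers via the environment-as-seen-from-the-tagged-particle method. Define the Markov process $(\xi_t)_{t \geq 0}$ which records the exclusion configuration viewed from the current position $X_t$ of the tagged particle. Because every non-root vertex of $T^d$ has a distinguished neighbour pointing toward the true root $o$, the state space of $\xi_t$ must encode both the occupation variables on a fixed reference rooted tree and this ``root direction''. With this enlarged state space $(\xi_t)$ is a Feller process on a compact space, and the radial coordinate admits the semimartingale decomposition
$$|X_t| \;=\; |X_0| + M_t + \int_0^t \Phi(\xi_s)\,ds,$$
where $\Phi(\xi)$ is the infinitesimal expected radial drift of the tagged particle given the local environment $\xi$ (a bounded function, because $p$ has finite range) and $M_t$ is the compensated jump martingale of $|X_t|$. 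Since each jump changes $|X_t|$ by at most the range of $p$, the predictable quadratic variation $\langle M\rangle_t$ grows linearly in $t$, so $M_t/t \to 0$ almost surely by a standard $L^2$ bound.

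The central step, as flagged in the abstract, is to show that $(\xi_t)$ admits an ergodic invariant probability measure $\pi$. This is where the tree diverges from the classical $\mathbb{Z}^d$ case: on $\mathbb{Z}^d$ the Palm measure $\nu_\rho^\ast$ itself is stationary for the environment process, whereas on $T^d$ a direct check shows it is not, since the site vacated by the tagged particle at its first jump is deterministically empty afterwards, breaking the product-Bernoulli structure. I would construct $\pi$ as a weak limit of Cesàro time averages of the laws of $\xi_t$ started from $\nu_\rho^\ast$, exploiting compactness of $\{0,1\}^V$. Proving ergodicity of $\pi$ is the hard part, and the natural strategy is to couple two copies of $(\xi_t)$ driven by common Poisson clocks, using the transience of the underlying random walk on $T^d$ together with the irreducibility of $\tilde p$ to force the two copies to agree on larger and larger neighbourhoods of the tagged particle. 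For $d=2$ the tree is just $\mathbb{Z}$, and the theorem reduces to the classical subdiffusivity of the tagged particle in symmetric exclusion on $\mathbb{Z}$, so that $|X_t|/t \to 0 = v$ trivially.

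With the ergodic invariant measure $\pi$ in hand, Birkhoff's theorem, combined with an argument that the Palm law $\nu_\rho^\ast$ lies in the domain of attraction of $\pi$ (for instance, via absolute continuity or a further coupling), gives
$$\frac{1}{t}\int_0^t \Phi(\xi_s)\,ds \;\longrightarrow\; \int \Phi\,d\pi =: v \qquad \text{a.s.,}$$
which combined with $M_t/t \to 0$ yields $|X_t|/t \to v$ almost surely. The explicit value $v = (1-\rho)(d-2)\sum_{i} i\,p(i)$ is then extracted from $\int \Phi\,d\pi$ by a flux-balance argument comparing the expected motion of the tagged particle to the single-particle radial drift in a stationary background of density $\rho$; the prefactor $(d-2)$ reflects the excess of the $d-1$ outward versus the single inward neighbour of a generic non-root vertex, while $(1-\rho)$ accounts for the density of empty target sites encountered by a typical jump.
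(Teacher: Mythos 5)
Your overall architecture (environment process, martingale-plus-drift decomposition, ergodic theorem) is the right one, but there are two genuine gaps. First, your claim that the Palm measure $\nu_\rho^\ast$ fails to be stationary for the environment process on $T^d$ is incorrect, and the ``direct check'' you offer does not work: the observation that the site just vacated by the tagged particle is empty immediately after a jump applies verbatim on $\mathbb{Z}^d$, where stationarity of the Palm measure for the environment process is classical (Liggett, Part III, Proposition 4.3). Stationarity concerns the law at a fixed time averaged over histories, not the conditional law given the last jump. On $T^d$ one identifies the vertex set with the free product of $d$ copies of $\mathbb{Z}/2\mathbb{Z}$, so the shifts $\tau_x$ form a group acting transitively and the $\mathbb{Z}^d$ computation carries over one-to-one; this is exactly what the paper does. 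Because $\nu_\rho^\ast$ is itself invariant, your Ces\`aro-limit construction of $\pi$ and the subsequent ``domain of attraction'' argument are unnecessary detours --- and ergodicity, which is the real content, is only named in your proposal (``couple two copies\dots to force agreement''), not proved. The paper's ergodicity proof is a concrete Saada-type argument: assuming a nontrivial invariant set $A$ with complement $B$, one finds sites $x,w$ with $\tau_x\eta\in A$, $\tau_w\eta\in B$ joined by a vacant path, plus vacant sites $y,z$ in two other branches of $w$, and exhibits two finite sequences of allowed jumps carrying $\eta$ to the same configuration $\eta^{x,y}$, one forcing the result into $A$ and the other into $B$ --- a contradiction. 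Your coupling sketch would need comparable detail to count as a proof.

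Second, working directly with $|X_t|$ forces you to enlarge the state space by the ``direction to the root'', and you never address what the invariant measure of this enlarged chain is or why $\int\Phi\,d\pi$ equals the claimed $v$; the ``flux-balance'' extraction of the constant is not a computation. The paper sidesteps this by fixing a boundary ray $\xi$ and tracking the horodistance $\langle X_t\rangle$, which is a group homomorphism, so the associated drift $\psi(\zeta)=\sum_z p(|z|)(1-\zeta(z))\langle z\rangle$ is a function of the environment alone and its mean under $\nu_\rho^\ast$ is computed in one line (the sum $\sum_{|z|=i}\langle z\rangle$ produces the factor $(d-2)$, and $1-\rho$ is the vacancy probability). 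One then recovers $|X_t|$ from $\langle X_t\rangle$ via transience: for $d\ge 3$ the trajectory converges to a boundary point distinct from $\xi$ almost surely, whence $|X_t|=\langle X_t\rangle+2|w|$ for all large $t$ and a fixed vertex $w$. You would need to supply an analogue of this bridge, or else actually carry out the stationarity and ergodicity analysis for your enlarged state space; neither is in your write-up. (Minor: for $d=2$ with general finite-range $p$ the conclusion $|X_t|/t\to 0$ follows from symmetry, not from the Arratia subdiffusivity result, which is specific to nearest-neighbour jumps.)
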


\begin{remark}
The result is not surprising, the same result was proved in \cite{saada1987} for an exclusion process with drift on $\mathbb{Z}^d$.
If $\rho=0$, we obtain the speed of random walk on $T^d$ with transition rates $\tilde{p}(.,.)$, if $\rho =1$, $v=0$ holds and in between the speed is linear in $1-\rho$.
\end{remark}

 For $d\geq 3$, we show that the tagged particle has a diffusive behavior.
\begin{theorem}\label{thm:CLT} For $d\geq 3$ and $\rho \in [0, 1)$, the tagged particle $(X_t)_{t\geq 0}$ on $T^d$ satisfies
\begin{equation*}
\frac{|X_t|-t v }{\sqrt{t}} \ \overset{\text{\normalfont d}}{\longrightarrow} \ \mathcal{N}(0,\sigma^2)
\end{equation*} for some $\sigma=\sigma(d,\rho,p(.))\in (0,\infty)$ and $v$ from Theorem \ref{thm:LLN}.
\end{theorem}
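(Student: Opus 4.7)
The plan is to combine a Dynkin martingale decomposition with the Kipnis--Varadhan method for additive functionals of the ergodic environment process. Let $(\xi_t)_{t\geq 0}$ denote the exclusion process seen from the tagged particle, a Markov process on $\{\xi\in\{0,1\}^V:\xi(o)=1\}$ whose generator $\mathcal{L}^{\ast}$ is read off from \eqref{def:generatorEta} by recentering at the tagged particle. By the ergodicity statement underlying Theorem~\ref{thm:LLN}, this process admits an ergodic invariant probability measure $\mu$ equivalent to $\nu_\rho^{\ast}$.

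Dynkin's formula applied to the integer-valued functional $|X_t|$ yields
$$|X_t| = M_t + \int_0^t \Phi(\xi_s)\,\dif s,$$
where $\Phi(\xi)=\sum_{y\in V}\tilde{p}(o,y)(1-\xi(y))\,|y|$ is a bounded local function (finite range of $\tilde p$) and $M_t$ is a pure-jump martingale with uniformly bounded jumps. Since $\int\Phi\,\dif\mu=v$ is the infinitesimal form of Theorem~\ref{thm:LLN}, we obtain
$$\frac{|X_t|-vt}{\sqrt{t}} = \frac{M_t}{\sqrt{t}} + \frac{1}{\sqrt{t}}\int_0^t\bigl(\Phi(\xi_s)-v\bigr)\,\dif s.$$
The martingale term is handled by the Lindeberg martingale CLT: the predictable quadratic variation $\langle M\rangle_t$ is itself an additive functional of $\xi_{\cdot}$, so Birkhoff applied to the ergodic measure $\mu$ gives $\langle M\rangle_t/t\to \sigma_M^2$ almost surely, while the Lindeberg condition is trivial from the bounded jumps. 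Hence $M_t/\sqrt{t}\Rightarrow \mathcal{N}(0,\sigma_M^2)$.

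For the additive functional $A_t:=\int_0^t(\Phi(\xi_s)-v)\,\dif s$, I would apply the Kipnis--Varadhan scheme: set $u_\lambda := (\lambda-\mathcal{L}^{\ast})^{-1}(\Phi-v)\in L^2(\mu)$ and verify that $\lambda\lVert u_\lambda\rVert_{L^2(\mu)}^2\to 0$ as $\lambda\downarrow 0$. This variance condition upgrades $A_t$, modulo an $o(\sqrt{t})$ boundary term, to a further martingale $\widetilde M_t$ with linearly growing bracket. A joint martingale CLT for $M_t+\widetilde M_t$ then yields $(|X_t|-vt)/\sqrt{t}\Rightarrow \mathcal{N}(0,\sigma^2)$.

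The main obstacle is the resolvent estimate $\lambda\lVert u_\lambda\rVert_{L^2(\mu)}^2\to 0$ on $T^d$, which amounts to a sector condition for $\mathcal{L}^{\ast}$ on $L^2(\mu)$. The classical crutch, translation invariance on $\mathbb{Z}^d$, is not available, and the density of $\mu$ with respect to $\nu_\rho^{\ast}$ is not explicit. I would exploit the homogeneity of $T^d$ (all non-root vertices are equivalent under the automorphism group), the strict positivity of the speed $v>0$ for $\rho<1$ and the transience of the random walk driving $(\eta_t)_{t\geq 0}$ when $d\geq 3$, all of which suggest that the environment process decorrelates rapidly as the tagged particle escapes ballistically into fresh subtrees. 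Strict positivity of $\sigma^2$ then follows from $\sigma_M^2>0$ and the impossibility of writing $\Phi-v$ as a coboundary $\mathcal{L}^{\ast}h$ with $h\in L^2(\mu)$.
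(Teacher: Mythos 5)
Your overall architecture (Dynkin decomposition plus Kipnis--Varadhan for the additive functional, then a joint martingale CLT) is the same as the paper's, but there are two genuine gaps. First, you apply Dynkin's formula directly to $|X_t|$ and assert that the compensator is $\int_0^t\Phi(\xi_s)\,\dif s$ with $\Phi$ a local function of the environment alone. This fails: the increment $|y|-|x|$ of the distance to the root is $-1$ in the parent direction and $+1$ in the child directions, so the drift of $|X_t|$ depends on which neighbor of $X_t$ points back toward $o$ --- information that is not contained in the environment seen from the particle. Your formula for $\Phi$ is the drift only when $X_t=o$. The compensator is an additive functional of the pair $(X_t,\zeta_t)$, whose invariant measure is not finite, so Kipnis--Varadhan does not apply to it. The paper avoids this by fixing an end $\xi\in\partial T^d$ and working with the horodistance $\langle X_t\rangle$, which by \eqref{eq:homomorphism} is a group homomorphism; its drift $\psi(\zeta)$ genuinely depends only on the environment (Lemma \ref{lem:martingaleRelation}). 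The CLT is then proved for $\langle X_t\rangle$ and transferred to $|X_t|$ at the very end using transience: by Corollary \ref{cor:LLN} one has $|X_t|=\langle X_t\rangle+2|w|$ for all large $t$ with $|w|$ a.s.\ finite and independent of $t$, see \eqref{eq:estimateHorodistance}. Without some such device your decomposition does not get off the ground.

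Second, you leave the resolvent estimate $\lambda\lVert u_\lambda\rVert^2_{L^2}\to 0$ as ``the main obstacle'' and misidentify where the difficulty lies. Since the rates $\tilde p$ are symmetric, the environment process is \emph{reversible} with respect to the explicit Palm measure $\nu_\rho^{\ast}$ (Proposition \ref{pro:invarianceEnvironment}); there is no unknown density of an invariant measure $\mu$, and no sector condition is needed --- the reversible Kipnis--Varadhan theorem (Proposition \ref{pro:kipnis}) requires only that the centered drift $\bar\psi=\psi-v$ lie in $\mathcal{H}_{-1}$. That membership is the actual technical content, and the paper obtains it from Sethuraman's Lemma 2.1 (Proposition \ref{pro:sethuraman}), whose proof uses exactly the two facts you gesture at --- transience of the underlying walk on $T^d$ for $d\geq 3$ and that $\bar\psi$ is a bounded local mean-zero function --- but turns them into an actual $\mathcal{H}_{-1}$ bound rather than a heuristic about decorrelation. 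Finally, your positivity argument for $\sigma^2$ ignores possible cancellation between the Dynkin martingale $M_t$ and the Kipnis--Varadhan martingale $N_t$, which are correlated; the paper handles non-degeneracy by the arguments of Section 6.9 of \cite{komorowski2012}.
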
  
\begin{remark}
Note that for $d=2$, the $d$-regular tree is $\mathbb{Z}$. In this case, the position of the tagged particle shows a subdiffusive behavior, see \cite{arratia1983}. 
\end{remark}

\subsection{Related work}

The exclusion process can be studied from a variety of different perspectives. To give limit laws for the position of a tagged particle is a classical problem, which was already mentioned in Spitzer’s work \cite{spitzer1970}. In the case where the underlying graph is $\mathbb{Z}^d$, many results were proved.
For translation invariant transition probabilities (which are not concentrated on the nearest neighbors in the one-dimensional case), Saada established a law of large numbers in \cite{saada1987}. 
In Section \ref{sec:environment}, we will closely follow her approach in order to show that the Palm measure is ergodic for the environment process. 
Kipnis and Varadhan established a central limit theorem for the position of the tagged particle with symmetric, translation invariant transition probabilities (which are not concentrated on the nearest neighbors in the one-dimensional case) in their famous paper \cite{kipnis1986central}. The case of nearest neighbor transition probabilities in one dimension was treated before by Arratia and Kipnis \cite{arratia1983,kipnis1986}. In more general non-symmetric translation invariant cases, the position of the tagged particle is diffusive as well, see \cite{sethuraman2006,sethuraman2000,varadhan1995}. For a general introduction to limit theorems for tagged particles, we refer to \cite[III]{liggett1999stochastic} and \cite{komorowski2012}.

\subsection{Outline of the paper}

This paper is organized as follows. 
In Section \ref{sec:environment}, we introduce the environment process, which can be interpreted as the exclusion process ``seen from the tagged particle''. As a key step for the proof of Theorem \ref{thm:LLN}, we show that the Palm measures $\nu_{\rho}^{\ast}$ are ergodic for the environment process, following the approach in \cite{saada1987}. This is a result of independent interest. 
In Section \ref{sec:speed}, we study the tagged particle process in more detail. We show that the tagged particle is transient using a martingale decomposition which can be found in Section III.4 of \cite{liggett1999stochastic}. We then deduce Theorem \ref{thm:LLN} following the ideas of Lyons et al. in \cite{lyons1995}. Section \ref{sec:diffusivity} is dedicated to the proof of Theorem \ref{thm:CLT} using the results of Kipnis and Varadhan as well as Sethuraman et al., see \cite{kipnis1986central,sethuraman2000}.

\section{The environment process}\label{sec:environment}

In order to define the environment process, we first introduce some notation. The $d$-regular tree $T^d=(V,E)$ with root $o$ has a natural interpretation in terms of Cayley graphs. For $I=\{ 1, \dots, d\}$, let 
\begin{equation*}
\mathcal{G} := \langle a_i, i \in I | a_i^2 = e \text{ for all } i \in I \rangle
\end{equation*} denote the free group over all $i \in I$ for the two-element groups $\{e, a_i\}$ with the relation $a_i^2 = e$ and neutral element $e$. The tree $T^d$ can be now be identified with the Cayley graph of $\mathcal{G}$ with respect to the generator $S=\{ a_1, \dots, a_d\}$. Note that the vertex set $V$ is isomorphic to $\mathcal{G}$ with $e \cong o$ and two corresponding elements $b,c \in \mathcal{G}$ are neighbored if and only if $ba = c$ holds for some $a \in S$. The group structure of $T^d$ allows us to extend this relation and define
\begin{equation}\label{eq:groupAddition}
b+c := bc \ \ \text{  as well as  } \ \ b-c := bc^{-1}
\end{equation} for $b,c \in \mathcal{G}$. In the same way, we write $x+y=z$ and  $x-y=z$  for $x,y,z \in V$ if the corresponding elements in $\mathcal{G}$ satisfy \eqref{eq:groupAddition}. Let the maps $\tau_x$ on configurations $\eta \in \{ 0,1\}^{V}$ be given as
\begin{equation*}
\tau_{x}\eta(y) := \eta(x+y)
\end{equation*} 
for all $x,y\in V$. Equipped with these notations, we define the \textbf{environment process} $(\zeta_t)_{t \geq 0}$ as
\begin{equation}\label{def:environment}
\zeta_t(x) := \tau_{X_t}\eta_t(x)
\end{equation} for all $t \geq 0$ and $x \in V$. 
Then $(\zeta_t)_{t \geq 0}$ is again a Feller process on the state space $\{ \zeta \in \{ 0,1\}^V \colon \zeta(o)=1 \}$  generated by the closure of
\begin{align}\label{def:generatorEnvironment}
Lf(\zeta) &= \sum_{x,y \neq o}  p(|x-y|)\zeta(x)(1-\zeta(y))\left[ f(\zeta^{x,y})- f(\zeta)\right] \nonumber \\
&+ \sum_{x \in V}  p(|x|) (1-\zeta(x))\left[ f(\tau_{x}\zeta)- f(\zeta)\right] \ .
\end{align} 
Note that each transition in $(\eta_t)_{t \geq 0}$ involving the root is a transition in $(\zeta_t)_{t \geq 0}$ followed by a translation.
In the following, our goal is to investigate the set of invariant measures of $(\zeta_t)_{t \geq 0}$.
\begin{proposition}\label{pro:invarianceEnvironment}
The measure $\nu^{\ast}_{\rho}$ is invariant for $(\zeta_t)_{t\geq 0}$ for all $\rho \in [0,1]$.
\end{proposition}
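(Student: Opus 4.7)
The plan is to verify that $\int L f \, d\nu_{\rho}^{\ast} = 0$ holds for every bounded local function $f$ on the state space, which by standard semigroup theory is sufficient for invariance of $\nu_{\rho}^{\ast}$ under the semigroup generated by $L$. The key tools will be the product structure of $\nu_{\rho}$ on $\{0,1\}^V$, the symmetry $p(|z|) = p(|-z|)$ of the rates, and the translation invariance of Bernoulli product measures on the Cayley graph.

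I would split the generator as $L = L_1 + L_2$, where $L_1$ is the exchange part (the first double sum, over $x,y \neq o$) and $L_2$ is the tagged-particle part (the second sum, over translations $\tau_x$), and handle each piece separately. For $L_1$: Under $\nu_{\rho}^{\ast}$, the marginals $(\zeta(x))_{x \neq o}$ are i.i.d.\ Bernoulli-$\rho$ random variables, independent of the deterministic value $\zeta(o) \equiv 1$. Since $L_1$ does not involve the root and has symmetric rates $p(|x-y|)$, it coincides with the generator of a symmetric exclusion process on $V \setminus \{o\}$, for which the Bernoulli product measure is well known to be invariant by \cite[III, Theorem 1.10]{liggett1999stochastic}. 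Hence $\int L_1 f \, d\nu_{\rho}^{\ast} = 0$.

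For $L_2$: I would compute, for each fixed $z \neq o$, the term $\E_{\nu_{\rho}^{\ast}}[(1-\zeta(z)) f(\tau_z \zeta)]$. Conditional on $\{\zeta(z)=0\}$, the image $\tau_z \zeta$ (interpreted with the appropriate identification so the root stays occupied, reflecting that the tagged particle has jumped from $o$ to $z$ and left behind a vacancy at what is now the site $-z$ in the new frame) should have distribution $\nu_{\rho}^{\ast}(\,\cdot \mid \zeta(-z)=0)$. Indeed, $y \mapsto z + y$ is a bijection of $V$ sending $z \mapsto o$ and $o \mapsto -z$, and the Bernoulli product structure is translation invariant, so the remaining coordinates of $\tau_z \zeta$ are again i.i.d.\ Bernoulli-$\rho$. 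I expect this to yield
\begin{equation*}
\E_{\nu_{\rho}^{\ast}}[(1-\zeta(z)) f(\tau_z \zeta)] = \E_{\nu_{\rho}^{\ast}}[(1-\zeta(-z)) f(\zeta)] \, .
\end{equation*}
Summing over $z \neq o$ and using $p(|z|) = p(|-z|)$ together with the change of variable $z \leftrightarrow -z$, the right-hand side balances the subtracted term $\sum_{z} p(|z|)\E_{\nu_{\rho}^{\ast}}[(1-\zeta(z))f(\zeta)]$ exactly, so $\int L_2 f \, d\nu_{\rho}^{\ast} = 0$. Combining the two pieces gives the claim.

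The main obstacle I expect is the careful bookkeeping of the map $\tau_z$ at the root: for $\zeta$ with $\zeta(z)=0$, the literal translation has $\tau_z\zeta(o) = \zeta(z) = 0$ and so does not lie in the state space $\{\zeta(o)=1\}$, so one must incorporate the implicit swap of values at $o$ and $-z$ that encodes the tagged particle's own motion before the bijection and translation-invariance argument can be invoked cleanly. Once that identification is set up, everything else reduces to a routine application of the translation invariance and product structure of $\nu_{\rho}$ on $\{0,1\}^V$.
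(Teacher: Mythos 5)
Your proposal is correct and is precisely the computation that the paper handles by citing Liggett \cite[III, Proposition 4.3]{liggett1999stochastic}: that reference carries out exactly this splitting of $L$ into the exchange part and the translation part, the cancellation of the exchange part by the symmetry of the rates under $\zeta\mapsto\zeta^{x,y}$, and the change of variables $z\leftrightarrow -z$ using $p(|z|)=p(|-z|)$ and translation invariance of the product measure, on $\mathbb{Z}^d$. Your handling of the implicit swap at the root in $\tau_z$ is the right reading of the generator, so you have supplied in full the argument the paper delegates to the citation.
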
 
\begin{proof}
See \cite[III, Proposition 4.3]{liggett1999stochastic} for $\mathbb{Z}^d$, which is one-to-one for $T^d$.
\end{proof} 
In order to calculate the speed of the tagged particle, we now show that $(\zeta_t)_{t \geq 0}$ started from $\nu_{\rho}^{\ast}$ 
is a stationary and ergodic process.

\subsection{Ergodicity of the Palm measure}\label{sec:ergodicity}

In order to derive ergodicity with respect to $\nu_\rho^{\ast}$ and $\rho \in (0,1)$, we closely follow the arguments of Saada in \cite{saada1987}. For simplicity of notation, we only consider the case of the simple exclusion process since for general $p(.)$ the same arguments apply.
\begin{proposition}\label{pro:ergodicity}
For $d\geq 3$, the measure $\nu^{\ast}_{\rho}$ is ergodic for $(\zeta_t)_{t\geq 0}$ and all $\rho \in (0,1)$.
\end{proposition}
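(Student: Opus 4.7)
The plan is to follow Saada's coupling approach in \cite{saada1987}. Since $(\zeta_t)_{t\geq 0}$ is a stationary process under $\nu_\rho^{\ast}$, ergodicity is equivalent to the following mixing property: for all bounded cylinder functions $\phi, \psi$ on $\{\zeta \in \{0,1\}^V : \zeta(o)=1\}$,
$$\lim_{t\to\infty}\, \int \phi \cdot P_t\psi\, d\nu_\rho^{\ast} \;=\; \int \phi\, d\nu_\rho^{\ast} \int \psi\, d\nu_\rho^{\ast}.$$
To establish this, I would construct, on a common probability space, two copies of the exclusion process $(\eta_t^1)$ and $(\eta_t^2)$ on $T^d$, both with initial distribution $\nu_\rho^{\ast}$ and coupled via the basic (common Poisson clocks) coupling. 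Writing $X_t^i$ for the position of the particle originally at $o$ in copy $i$ and $\zeta_t^i = \tau_{X_t^i}\eta_t^i$ for the associated environment process, the task reduces to controlling the gap between the laws of $\zeta_t^1$ and $\zeta_t^2$ restricted to finite balls around $o$.

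The natural object to track is the discrepancy set $\Delta_t = \{x \in V : \eta_t^1(x) \neq \eta_t^2(x)\}$. Under the basic coupling, the discrepancies evolve as a system of interacting ``second-class'' objects with two labels (for excesses of type $1$ or type $2$), which annihilate when they meet and otherwise diffuse subject to exclusion. On $T^d$ with $d \geq 3$, where the simple random walk is transient, I expect the expected number of discrepancies in any finite window around $o$ to decay to zero as $t \to \infty$, provided the initial coupling is chosen so that discrepancies are sparse near the root (for example, using the maximal coupling of $\nu_\rho^{\ast}$ with itself in a finite region and taking $\eta_0^1 = \eta_0^2$ elsewhere).

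Assuming this dispersion statement, I would argue as in \cite[Section 3]{saada1987} that $\zeta_t^1$ and $\zeta_t^2$ agree on any prescribed ball around $o$ with probability tending to one. From here, the mixing property above follows by a standard density/approximation argument for general cylinder functions, and ergodicity of $\nu_\rho^{\ast}$ is a direct consequence, since an invariant event $A$ then satisfies $\nu_\rho^{\ast}(A) = \nu_\rho^{\ast}(A)^2$.

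The main obstacle is that the tagged particles $X_t^1$ and $X_t^2$ move \emph{independently} in the tree, and because $T^d$ is non-amenable for $d\geq 3$, their trajectories may wander into different ends; the shift arguments available on $\mathbb{Z}^d$ in \cite{saada1987} do not carry over directly, since the group $\mathcal{G}$ introduced in Section \ref{sec:environment} is not abelian. One must carefully exploit the free-group structure of $T^d$ together with the strong transience of random walks on $T^d$ for $d \geq 3$ to ensure that the local environments seen by the two tagged particles can nevertheless be coupled. Making this coupling compatible with the re-centering maps $\tau_{X_t^1}$ and $\tau_{X_t^2}$ is the technical heart of the proof and the reason the assumption $d \geq 3$ is imposed here.
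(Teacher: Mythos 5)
Your proposal has a genuine gap at its core, and it also aims at the wrong target. First, ergodicity of a stationary process is \emph{not} equivalent to the mixing property you state; mixing is strictly stronger. That alone is not fatal (mixing would suffice), but it means you have set yourself a harder problem than necessary, and it is not the route of \cite{saada1987}, whose ergodicity argument is not a coupling argument at all. The fatal issue is that the key step --- that the two environment processes $\zeta_t^1=\tau_{X_t^1}\eta_t^1$ and $\zeta_t^2=\tau_{X_t^2}\eta_t^2$ eventually agree on any fixed ball around $o$ with high probability --- is precisely the part you leave as an ``expectation'' and an ``assumption,'' and the obstacle you yourself identify makes it unworkable as stated. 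By Theorem \ref{thm:LLN} each tagged particle escapes to a (random) end of $T^d$ at positive linear speed, and under any reasonable coupling the two tagged particles a.s.\ converge to distinct ends, so the two environments are recentered at points whose mutual distance grows linearly in $t$. Controlling the discrepancy set $\Delta_t$ of the \emph{unshifted} processes near the origin therefore says nothing about agreement of $\zeta_t^1$ and $\zeta_t^2$ near $o$: what you would need is that the windows around $X_t^1$ and $X_t^2$ look alike, which is a local-equilibrium/asymptotic-independence statement for the environment seen from the particle --- a substantially deeper result than ergodicity, and no argument for it is offered.

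The paper's proof is entirely different and much softer. Assume a nontrivial invariant set $A$ for $(\zeta_t)_{t\geq 0}$ with complement $B$; pass to $\tilde A=\bigcup_{x\in V,\,\zeta\in A}\tau_x\zeta$ and $\tilde B$, which are invariant for $(\eta_t)_{t\geq 0}$; use extremality (hence ergodicity) of $\nu_\rho$ for the full exclusion process to get $\nu_\rho(\tilde A)=\nu_\rho(\tilde B)=1$. For a typical $\eta$ one then finds occupied sites $x$ and $w$ with $\tau_x\eta\in A$ and $\tau_w\eta\in B$, joined by a vacant path, together with vacant sites $y,z$ in two further branches at $w$ (this is where $d\geq 3$ enters). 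Two explicit finite sequences of allowed particle moves transform $\eta$ into the same configuration $\eta^{x,y}$, one returning the $B$-particle to $w$ and one bringing the $A$-particle to $w$; invariance of $A$ and $B$ under $(\zeta_t)_{t\geq 0}$ together with the graphical representation then forces a configuration to lie in both $A$ and $B$, a contradiction. If you wish to salvage your approach you must actually prove the dispersion and recentering-compatibility claims you defer, which is the entire content of the proposition.
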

In order to show Proposition \ref{pro:ergodicity}, suppose that $\nu_{\rho}^{\ast}$ is not ergodic for $(\zeta_t)_{t \geq 0}$. Then we can find a set $A \subseteq \left\lbrace \zeta \in \{ 0,1\}^{V} \colon \zeta(o) = 1\right\rbrace$ such that
\begin{equation}\label{def:setA}
0 < \nu_{\rho}^{\ast}(A) < 1
\end{equation} and $A$ is invariant, i.e. 
\begin{equation*} 
\P^{\zeta}\left( \zeta_t \in A \right) = 1
\end{equation*}
holds for almost all $\zeta \in A$, hence $A$ is a non-trivial invariant set for $(\zeta_t)_{t \geq 0}$. Define $B := \left\lbrace \zeta \in \{ 0,1\}^{V} \colon \zeta(o) = 1 \right\rbrace \setminus A$ and note that $B$ is a non-trivial, invariant set for $(\zeta_t)_{t \geq 0}$ as well. Recall that $\nu_{\rho}$ is extremal invariant for the simple exclusion process $\left(\eta_t \right)_{t \geq 0}$ and hence $\left(\eta_t \right)_{t \geq 0}$ started from $\nu_{\rho}$ is ergodic, see \cite[Theorem B52]{liggett1999stochastic}. We want to use this observation to establish a contradiction. 
Let the sets $\tilde{A}$ and $\tilde{B}$ be given as
\begin{equation*}
\tilde{A} := \bigcup_{x \in V,\ \zeta \in A} \tau_x\zeta \ \ \text{ and }\ \ \tilde{B} := \bigcup_{x \in V,\ \zeta \in B} \tau_x\zeta \ .
\end{equation*}
Then, $\tilde{A}$ and $\tilde{B}$ are invariant for $(\eta_t)_{t \geq 0}$. Since $A \subseteq \tilde{A}$ and $B \subseteq \tilde{B}$, we obtain from \eqref{def:setA} that
\begin{equation}\label{eq:setsABtilde}
\nu_{\rho}(\tilde{A})=\nu_{\rho}(\tilde{B})=1 \ .
\end{equation}
In particular, the sets $\tilde{A}$ and $\tilde{B}$ are not disjoint. From this, we want to deduce that $A$ and $B$ are not disjoint, contradicting the definition of $B$. To do so, we need the following lemma.

\begin{lemma}\label{lem:ergodicity} For almost every $\eta$ distributed according to $\nu_{\rho}$, there exist integers $n,m,l$  and sites
\begin{equation*}
w,x,y,z; \ \  x_1,x_2, \dots ,x_n; \ \ y_1,y_2, \dots ,y_m; \ \  z_1,z_2, \dots ,z_l
\end{equation*} with the following properties:
\begin{itemize}
\item[(i)]\label{item1} $\tau_x\eta \in A$, $\tau_w\eta \in B$
\item[(ii)]\label{item2} $\eta(y)=\eta(z)=\eta(x_1)=\ldots=\eta(x_n)=0$
\item[(iii)]\label{item3} $x,y,z$ are located in pairwise different branches with respect to $w$ in $T^d$
\item[(iv)]\label{item4} $w$ is connected to $x$ via the path $x_1 \sim x_2 \sim \dots \sim x_n$, connected to $y$ via the path $y_1 \sim y_2 \sim \dots \sim y_m$ and connected to $z$ via the path $z_1 \sim z_2 \sim \dots \sim z_l$.
\end{itemize}
\end{lemma}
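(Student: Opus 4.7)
My plan is to combine the full-measure events $\tilde A$ and $\tilde B$ from \eqref{eq:setsABtilde} with positive-probability cylinder events arising from the product structure of $\nu_\rho$. Let $\mathcal{E} \subseteq \{0,1\}^V$ denote the set of configurations $\eta$ admitting sites $w, x, y, z$ and paths satisfying (i)--(iv). The event $\mathcal{E}$ is invariant under the translation action of $\mathcal{G}$ on $\{0,1\}^V$, and since $\nu_\rho$ is a Bernoulli product measure on $\{0,1\}^V$ it is mixing and in particular ergodic for this action, so $\nu_\rho(\mathcal{E}) \in \{0,1\}$. It therefore suffices to show $\nu_\rho(\mathcal{E}) > 0$.

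By translation invariance I may fix $w = o$ and require $\eta \in B$, an event of probability $\nu_\rho(B) = \rho \, \nu_\rho^*(B) > 0$. The assumption $d \geq 3$ provides at least three distinct branches at $o$. In two of them I place $y$ and $z$ as specific vacant neighbors of $o$ (each vacant with probability $1-\rho$, independently by the product structure); in the remaining third branch I search for $x$ with $\tau_x \eta \in A$ together with a completely vacant connecting path $x_1, \ldots, x_n$ from $o$ to $x$.

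The main technical obstacle is producing such an $x$, since the global condition $\tau_x \eta \in A$ and the local vacancy condition along the path are not a priori independent under $\nu_\rho$. I would resolve this by an $L^1(\nu_\rho)$-approximation: for any $\varepsilon > 0$, there exist a finite $K_\varepsilon \subset V$ and a cylinder event $A_\varepsilon \in \sigma(\eta(u) : u \in K_\varepsilon)$ with $\nu_\rho(A \triangle A_\varepsilon) < \varepsilon$. For $x$ chosen deep enough in the selected branch that the translated window $\{x + u : u \in K_\varepsilon\}$ lies beyond the intended path and is disjoint from $\{y, z\}$, the event $\{\tau_x \eta \in A_\varepsilon\}$ becomes independent of the local vacancy events at $o$, so their joint probability is a product of positive factors. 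A Borel--Cantelli argument over infinitely many such candidates $x$ at increasing depth (with pairwise disjoint windows), combined with letting $\varepsilon \to 0$, then produces some $x$ satisfying $\tau_x \eta \in A$ exactly together with a vacant connecting path; combined with the vacancies at $y, z$ and with $\eta \in B$, this yields $\nu_\rho(\mathcal{E}) > 0$ and establishes the lemma.
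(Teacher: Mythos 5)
There is a genuine gap at the heart of your argument: the step that produces a site $x$ with $\tau_x\eta\in A$ \emph{together with} a completely vacant connecting path to $w=o$. Your cylinder approximation cannot deliver this. On the technical level, since $A\subseteq\{\zeta\colon\zeta(o)=1\}$, any useful cylinder $A_\varepsilon$ depends on a window $K_\varepsilon$ containing $o$ and (for a nontrivial approximation) a neighborhood of $o$, so the translated window $x+K_\varepsilon$ contains $x$ and its neighbors, in particular the last path vertex $x_n\sim x$; the window therefore cannot lie ``beyond'' the path, and $\{\tau_x\eta\in A_\varepsilon\}$ is not independent of the vacancy of the path. More fundamentally, the positivity you need is simply not a consequence of $\nu_\rho(A)>0$ plus the product structure: for a general positive-measure set $A\subseteq\{\zeta(o)=1\}$ one can have $\nu_\rho\bigl(A\cap\{\zeta\equiv 0 \text{ on } P\}\bigr)=0$ for every nonempty path $P$ starting at a neighbor of $o$ (for instance if $A$ forces all neighbors of the root to be occupied), in which case no $x$ at distance at least $2$ from $w$ admits a vacant connecting path at all, and no $L^1$-approximation or Borel--Cantelli argument over deep candidates can make a null event positive. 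At this stage of the proof one may not yet use that no such pathological invariant $A$ exists --- that is what Proposition \ref{pro:ergodicity} is ultimately proving --- so the lemma must be derived from the structural facts your argument never invokes: $A$ and $B$ partition $\{\zeta\colon\zeta(o)=1\}$, and both $\tilde A$ and $\tilde B$ have full $\nu_\rho$-measure.

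The paper's proof exploits exactly this. Almost surely there exist $x,w$ with $\tau_x\eta\in A$ and $\tau_w\eta\in B$; every occupied vertex $v$ on the geodesic between them satisfies $\tau_v\eta\in A$ or $\tau_v\eta\in B$; hence, by a discrete intermediate-value argument along the geodesic, there are two occupied sites, the first translating $\eta$ into $A$ and the next into $B$, with only vacant sites in between. Taking these as the new $x$ and $w$ yields the vacant path for free, with no independence or approximation needed. Your reduction via translation invariance of the event and ergodicity of the Bernoulli measure is fine but unnecessary, and your choice of vacant $y,z$ in two further branches of $w$ is essentially what the paper does (each branch contains infinitely many independent $\mathrm{Bernoulli}(1-\rho)$ vacancies); but the missing intermediate-value idea is the essential content of the lemma, so the proposal as written does not close.
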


\begin{proof} By \eqref{eq:setsABtilde}, there almost surely exist sites $x,w \in V$ such that $\tau_x\eta \in A$ and $\tau_w\eta \in B$ holds. Let $x_1,x_2, \dots ,x_n$ denote the shortest path connecting $x$ and $w$, which may be empty for $x \sim w$. Without loss of generality, we assume that $\eta(x_1)=\ldots=\eta(x_n)=0$ holds. More precisely, note that $A$ and $B$ form a partition of $\{ \zeta \in \{0,1\}^V \colon \zeta(o)=1\}$ and so we have that $\tau_w \eta \in A \dotcup B$ holds for all occupied sites $w\in V$. Hence, among the occupied vertices along the path from $x$ to $w$, there exist two sites $\tilde{x},\tilde{y}$ with $\tau_{\tilde{x}}\eta \in A$, $\tau_{\tilde{y}}\eta \in B$ with only vacant sites in between of them. Take $\tilde{x},\tilde{y}$ as new choices for $x$ and $y$. \\
In order to show that properties \hyperref[item3]{(iii)} and \hyperref[item3]{(iv)} hold, let $C(x,w)$ and $D(x,w)$ denote the vertices of two arbitrary branches of $w$ different from the one containing $x$. Since $C(x,w)$ and $D(x,w)$ contain infinitely many sites, for $\nu_{\rho}^{\ast}$-almost every $\eta$ there are infinitely many $y$ in $C(x,w)$ and $z$ in $D(x,w)$ such that $\eta(y) = \eta(z) = 0$ holds. Choose two of these sites as $y$ and $z$ arbitrarily and define $y_1,y_2, \dots ,y_m$ and $z_1,z_2, \dots ,z_l$ to be the shortest paths connecting them to $w$, respectively.
\end{proof}
\begin{proof}[Proof of Proposition \ref{pro:ergodicity}]
Take an $\eta$ satisfying the properties in Lemma \ref{lem:ergodicity} for sites
\begin{equation*}
N:= \left\lbrace w,x,y,z, x_1, x_2, \dots, x_n, y_1,y_2, \dots, y_m, z_1,z_2, \dots, z_l \right\rbrace .
\end{equation*} Fix an arbitrary time $t_0 > 0$. Let $\tilde{\eta}$ denote the configuration that agrees with $\eta$ on $N$ while on the complement of $N$, $\tilde{\eta}$ has the distribution of a simple exclusion process $(\eta_{t})_{t \geq 0}$ at time $t_0$ which is started from $\eta$ and where all moves involving the sites $N$ are suppressed.  In the following, we consider two ways of transforming $\eta$ into $\eta^{x,y}$. Since the transformations use only transitions in $N$, they also provide two ways of transforming $\tilde{\eta}$ into $\tilde{\eta}^{x,y}$ for any fixed $t_0>0$.
\begin{itemize}
\item[(a)]\label{def:way1} First, move the particle from $w$ to $z$ along $z_1,z_2, \dots ,z_l$, i.e. for $\{ i_j, 1\leq j \leq J\}$ being successive values of $i$ such that $\eta(z_{i_j})=1$, move the particle from $z_{i_{J}}$ to $z$, then from $z_{i_{J}}$ to $z_{i_{J-1}}$ and so on. Secondly, move the particle from $x$ to $y$ along $x_1, x_2, \dots, x_n$ and $y_1,y_2, \dots, y_m$ in the same way. Finally, move the particle from $z$ back to $w$ along $z_1,z_2, \dots ,z_l$.
\item[(b)] \label{def:way2} Move the particle from $w$ to $y$ along $y_1,y_2, \dots ,y_m$, then the particle from $x$ to $w$ along $x_1, x_2, \dots, x_n$.
\end{itemize} A visualization of the transformations in \hyperref[def:way1]{(a)} and \hyperref[def:way2]{(b)} is given in Figure \ref{figureLemma}. Note that in \hyperref[def:way1]{(a)}, the particle originally at $w$ moves back to $w$. Since $\tau_w \eta \in B$ and $B$ is invariant for the process $(\zeta_t)_{t \geq 0}$, we conclude that $\tau_w \tilde{\eta}^{x,y} \in B$ holds almost surely. In transformation \hyperref[def:way2]{(b)}, the particle originally at $x$ moved to $w$. Since $\tau_x \eta \in A$ and $A$ is invariant for $(\zeta_t)_{t \geq 0}$, we conclude that $\tau_w \tilde{\eta}^{x,y} \in A$ holds almost surely. Using the graphical representation, observe that $\eta_{t_0}$ agrees with $\tilde{\eta}^{x,y}$  with positive probability. Hence, we obtain a contradiction to $A$ and $B$ being disjoint.
\end{proof}


\section{Speed of the tagged particle}\label{sec:speed}

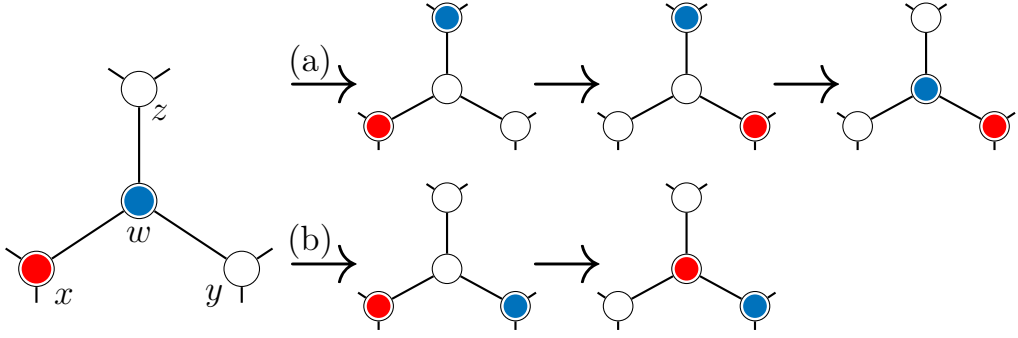
\begin{figure}

\begin{center}
\begin{tikzpicture}[scale=0.9]

	\node[shape=circle,scale=1.2,draw] (A2) at (0,0){} ;
 	\node[shape=circle,scale=1.2,draw] (B2) at (1.5,1){} ;
	\node[shape=circle,scale=1.2,draw] (C2) at (3,0) {};
	\node[shape=circle,scale=1.2,draw] (D2) at (1.5,2.65){} ;
	
\node[scale=1.1]  at (1.5,0.5){$w$};
\node[scale=1.1]  at (0.4,-0.4){$x$};
\node[scale=1.1]  at (1.8,2.3){$z$};
\node[scale=1.1]  at (2.6,-0.4){$y$};

	\draw[thick] (A2) to (B2);		
	\draw[thick] (B2) to (C2);		
	\draw[thick] (B2) to (D2);		

	\draw[thick] (A2) to (0,-0.5);	
	\draw[thick] (C2) to (3,-0.5);	
	\draw[thick] (A2) to (-0.45,0.3);	
	\draw[thick] (C2) to (3.45,0.3);		
	\draw[thick] (D2) to (1.95,2.95);	
	\draw[thick] (D2) to (1.05,2.95);	

 \node[shape=circle,fill=red] (k1) at (A2){};
 \node[shape=circle,fill=RoyalBlue] (k2) at (B2){}; 
 
 	\node[shape=circle,scale=1,draw] (A3) at (5,2.1){} ;
 	\node[shape=circle,scale=1,draw] (D3) at (6,3.7){} ;
	\node[shape=circle,scale=1,draw] (C3) at (7,2.1) {};
	\node[shape=circle,scale=1,draw] (B3) at (6,2.65){} ;

	\draw[thick] (A3) to (B3);		
	\draw[thick] (B3) to (C3);		
	\draw[thick] (B3) to (D3);		
	\draw[thick] (A3) to (5,1.75);	
	\draw[thick] (C3) to (7,1.75);	
	\draw[thick] (A3) to (4.7,2.3);	
	\draw[thick] (C3) to (7.3,2.3);		
	\draw[thick] (D3) to (6.3,3.9);	
	\draw[thick] (D3) to (5.7,3.9);	

 \node[shape=circle,scale=0.8,fill=red] (k1) at (A3){};
 \node[shape=circle,scale=0.8,fill=RoyalBlue] (k1) at (D3){};
 
  	\node[shape=circle,scale=1,draw] (A3) at (8.5,2.1){} ;
 	\node[shape=circle,scale=1,draw] (D3) at (9.5,3.7){} ;
	\node[shape=circle,scale=1,draw] (C3) at (10.5,2.1) {};
	\node[shape=circle,scale=1,draw] (B3) at (9.5,2.65){} ;

	\draw[thick] (A3) to (B3);		
	\draw[thick] (B3) to (C3);		
	\draw[thick] (B3) to (D3);		
	
	\draw[thick] (A3) to (8.5,1.75);	
	\draw[thick] (C3) to (10.5,1.75);	
	\draw[thick] (A3) to (8.2,2.3);	
	\draw[thick] (C3) to (10.8,2.3);		
	\draw[thick] (D3) to (9.8,3.9);	
	\draw[thick] (D3) to (9.2,3.9);	

 \node[shape=circle,scale=0.8,fill=red] (k1) at (C3){};
 \node[shape=circle,scale=0.8,fill=RoyalBlue] (k1) at (D3){};
 
   	\node[shape=circle,scale=1,draw] (A3) at (12,2.1){} ;
 	\node[shape=circle,scale=1,draw] (D3) at (13,3.7){} ;
	\node[shape=circle,scale=1,draw] (C3) at (14,2.1) {};
	\node[shape=circle,scale=1,draw] (B3) at (13,2.65){} ;

	\draw[thick] (A3) to (B3);		
	\draw[thick] (B3) to (C3);		
	\draw[thick] (B3) to (D3);		
	\draw[thick] (A3) to (12,1.75);	
	\draw[thick] (C3) to (14,1.75);	
	\draw[thick] (A3) to (11.7,2.3);	
	\draw[thick] (C3) to (14.3,2.3);		
	\draw[thick] (D3) to (13.3,3.9);	
	\draw[thick] (D3) to (12.7,3.9);	

 \node[shape=circle,scale=0.8,fill=red] (k1) at (C3){};
 \node[shape=circle,scale=0.8,fill=RoyalBlue] (k1) at (B3){};

 \node[shape=circle,scale=1,draw] (A3) at (5,-0.55){} ;
 	\node[shape=circle,scale=1,draw] (D3) at (6,1.05){} ;
	\node[shape=circle,scale=1,draw] (C3) at (7,-0.55) {};
	\node[shape=circle,scale=1,draw] (B3) at (6,0){} ;

	\draw[thick] (A3) to (B3);		
	\draw[thick] (B3) to (C3);		
	\draw[thick] (B3) to (D3);		
	
	\draw[thick] (A3) to (5,1.75-2.65);	
	\draw[thick] (C3) to (7,1.75-2.65);	
	\draw[thick] (A3) to (4.7,2.3-2.65);	
	\draw[thick] (C3) to (7.3,2.3-2.65);		
	\draw[thick] (D3) to (6.3,3.9-2.65);	
	\draw[thick] (D3) to (5.7,3.9-2.65);	

 \node[shape=circle,scale=0.8,fill=red] (k1) at (A3){};
 \node[shape=circle,scale=0.8,fill=RoyalBlue] (k1) at (C3){};
 
  \node[shape=circle,scale=1,draw] (A3) at (8.5,-0.55){} ;
 	\node[shape=circle,scale=1,draw] (D3) at (9.5,1.05){} ;
	\node[shape=circle,scale=1,draw] (C3) at (10.5,-0.55) {};
	\node[shape=circle,scale=1,draw] (B3) at (9.5,0){} ;

	\draw[thick] (A3) to (B3);		
	\draw[thick] (B3) to (C3);		
	\draw[thick] (B3) to (D3);		
	\draw[thick] (A3) to (8.5,1.75-2.65);	
	\draw[thick] (C3) to (10.5,1.75-2.65);	
	\draw[thick] (A3) to (8.2,2.3-2.65);	
	\draw[thick] (C3) to (10.8,2.3-2.65);		
	\draw[thick] (D3) to (9.8,3.9-2.65);	
	\draw[thick] (D3) to (9.2,3.9-2.65);

 \node[shape=circle,scale=0.8,fill=red] (k1) at (B3){};
 \node[shape=circle,scale=0.8,fill=RoyalBlue] (k1) at (C3){};

\node[scale=2.3] (arrowing) at (4.2,2.65){$\rightarrow$};
\node[scale=2.3] (arrowing) at (4.2,0){$\rightarrow$};

\node[scale=1.1] (arrowing) at (4,3.05){\hyperref[def:way1]{(a)}};
\node[scale=1.1] (arrowing) at (4,0.4){\hyperref[def:way2]{(b)}};

\node[scale=2.3] (arrowing) at (7.75,2.65){$\rightarrow$};
\node[scale=2.3] (arrowing) at (7.75,0){$\rightarrow$};

\node[scale=2.3] (arrowing) at (11.25,2.65){$\rightarrow$};


\end{tikzpicture}
\end{center}
\caption{\label{figureLemma} Transformations for $\eta$ to $\eta^{x,y}$ in $T^3$ where $x,y,z$ are neighbors of $w$. }
\end{figure}

In this section, we prove that the tagged particle $(X_t)_{t \geq 0}$ on $T^d$ satisfies a strong law of large numbers. As a first step, we show that $(X_t)_{t \geq 0}$ is \textbf{transient} for $d\geq 3$, i.e. $(X_t)_{t \geq 0}$ visits the root of $T^d$ almost surely only finitely many times. 
To do so, we use the framework introduced by Lyons et al. in order to study random walks on Galton-Watson trees, see \cite{lyons1995}. An infinite path $x_0,x_1,\dots$ of sites in $T^d$ will be denoted by $\overset{\rightarrow}{x}$. We say that a path $\overset{\rightarrow}{x}$ is a \textbf{ray} $\xi$ if it never backtracks, i.e. $x_i \neq x_{j}$ for all $i\neq j$. The set of rays starting at the root is called the \textbf{boundary} $\partial T^d$ of the tree $T^d$. We say that a path $\overset{\rightarrow}{x}$ \textbf{converges} to a ray $\xi$ if $\overset{\rightarrow}{x}$ visits every site at most finitely many times and $\xi$ is the unique ray which is intersected infinitely often. 
For a site $x$ and a ray $\xi$, let $[x,\xi]$ denote the unique ray starting in $x$ and converging to $\xi$. Moreover, 
for two distinct sites $x,y \in V$, let $x\wedge_{\xi} y$ denote the site where $[x,\xi]$ and $[y,\xi]$ meet for the first time. \\

For two vertices $x,y \in V$, recall that $|x-y|$ denotes the shortest path distance between $x$ and $y$. We define their \textbf{horodistance} with respect to some given ray $\xi$ as the signed distance
\begin{equation*}
\langle y-x \rangle_{\xi} := |y - x\wedge_{\xi} y| - |x - x\wedge_{\xi} y| \ .
\end{equation*} We set $\langle x\rangle_{\xi} := \langle x - o \rangle_{\xi}$ with respect to the root $o$ of $T^d$. 
Throughout the rest of this section, let $\xi \in \partial T^d$ be an arbitrary, but fixed boundary point of $T^d$, which will in the following be omitted as a subscript in the notation of the horodistance. 
Note that without loss of generality, we can define the addition on $T^d$ such that the horodistance defines a group homomorphism between $(T^d,+)$ and $(\mathbb{Z},+)$, i.e. 
\begin{equation}\label{eq:homomorphism}
\langle x+y\rangle= \langle x\rangle+\langle y\rangle
\end{equation} holds for all sites $x,y \in V$. \\

Our goal is to show a law of large numbers for the stochastic process $(\langle X_t\rangle)_{t \geq 0}$ from which we will deduce Theorem \ref{thm:LLN}. 
We define 
\begin{equation*}
\psi(\zeta) := \sum_{z \in V} p(|z|) (1-\zeta(z)) \langle z\rangle
\end{equation*} to be the \textbf{local drift at the root} for a configuration $\zeta \in \{ 0,1\}^{V}$ with $ \zeta(o) = 1$. Recall the definition of the environment process $(\zeta_t)_{t \geq 0}$ in \eqref{def:environment}. We want to express $(\langle X_t\rangle)_{t \geq 0}$ in terms of $(\zeta_t)_{t \geq 0}$. Observe that $(X_t, \zeta_t)_{t \geq 0}$ is a Feller process whose generator is given as the closure of
\begin{align*}
\tilde{\mathcal{L}}f(x,\zeta) &= \sum_{y,z \neq o}  p(|z-y|)\zeta(y)(1-\zeta(z))\left[ f(x,\zeta^{y,z})- f(x,\zeta)\right]  \\
&+ \sum_{y \in V}  p(|x-y|) (1-\zeta(y))\left[ f(y,\tau_{y-x}\zeta)- f(x,\zeta)\right]
\end{align*} 
 and let $(\mathcal{F}_t)_{t \geq 0}$ denote the respective $\sigma$-algebra. Note that the process $(X_t)_{t \geq 0}$ on its own is in general not Markovian. We now decompose the process $(\langle X_t \rangle )_{t \geq 0}$ into a martingale and a function depending only on the environment process. This follows the ideas of Proposition 4.1 in \cite[III]{liggett1999stochastic}.
 \begin{lemma}\label{lem:martingaleRelation} For all $t \geq 0$, it holds that
 \begin{equation}\label{eq:martingaleRelation}
 \langle X_t\rangle = \int_{0}^{t} \psi(\zeta_s) \dif s + M_t
 \end{equation} where $(M_t)_{t \geq 0}$ is a martingale with respect to $(\mathcal{F}_t)_{t \geq 0}$.
 \end{lemma}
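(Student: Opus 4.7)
The plan is to apply Dynkin's formula to the joint Feller process $(X_t,\zeta_t)_{t\geq 0}$ with generator $\tilde{\mathcal{L}}$ and the unbounded test function $f(x,\zeta):=\langle x\rangle$. The homomorphism property \eqref{eq:homomorphism} is exactly what makes this test function tractable: it turns jumps in the tree into additive increments on $\mathbb{Z}$, so that the compensator $\tilde{\mathcal{L}}f$ ought to depend only on $\zeta$ and, by a direct computation, should coincide with the local drift $\psi(\zeta)$. Combined with $\langle X_0\rangle=\langle o\rangle=0$, this would produce the claimed decomposition with $M_t:=\langle X_t\rangle-\int_0^t\psi(\zeta_s)\dif s$.

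The computation itself I expect to be routine. In the first sum of $\tilde{\mathcal{L}}$ (bulk exchanges with $y,z\neq o$) the position $x$ is unchanged, so $f(x,\zeta^{y,z})-f(x,\zeta)=0$ and the whole term vanishes. For the second sum, the transition $x\mapsto y$ changes $f$ by $\langle y\rangle-\langle x\rangle=\langle y-x\rangle$ by \eqref{eq:homomorphism}. Relabelling $z:=y-x$ and using translation invariance of $p(\cdot)$ together with the fact that the environment variable at displacement $z$ records the occupancy of the target site seen by the tagged particle, the second sum collapses to
\begin{equation*}
\sum_{z\in V}p(|z|)(1-\zeta(z))\langle z\rangle=\psi(\zeta),
\end{equation*}
as desired. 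Crucially, $\psi$ is uniformly bounded: the finite support of $p(\cdot)$ means only finitely many $z$ contribute, and each $\langle z\rangle$ is bounded in absolute value by the range of $p$.

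The main obstacle is that $f(x,\zeta)=\langle x\rangle$ is unbounded on the state space, so the textbook form of Dynkin's formula does not apply directly. The cleanest way to handle this is to view $M_t$ as the compensated jump process of $(\langle X_s\rangle)_{s\leq t}$: since $p(\cdot)$ has finite range $R$, each jump of $\langle X_s\rangle$ has magnitude at most $R$ and occurs at rate at most $\sum_i p(i)$, so the associated counting measure has a finite, bounded intensity, and subtracting the compensator $\int_0^t\psi(\zeta_s)\dif s$ yields a genuine (in fact square-integrable) $(\mathcal{F}_t)$-martingale. An alternative is to apply Dynkin's formula first to the bounded truncations $f_N:=(\langle x\rangle\wedge N)\vee(-N)$ and pass to the limit $N\to\infty$, using the uniform $L^2$-control on $\langle X_t\rangle$ coming from the bounded jump size and bounded rate. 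This is exactly the martingale decomposition underlying Proposition III.4.1 in \cite{liggett1999stochastic}, applied here with the horodistance in place of a Euclidean coordinate function.
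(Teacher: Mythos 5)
Your proposal is correct and follows essentially the same route as the paper: apply Dynkin's formula to $(X_t,\zeta_t)_{t\ge0}$ with the test function $f(x,\zeta)=\langle x\rangle$, use the homomorphism property \eqref{eq:homomorphism} to show that the bulk term vanishes and the jump term collapses to $\psi(\zeta)$, and define $M_t$ as the resulting compensated process. Your additional care with the unboundedness of $f$ (via truncation or the compensated-jump viewpoint, justified by the finite range of $p(\cdot)$) is a legitimate refinement of a point the paper passes over silently, but it does not change the argument.
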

 \begin{proof} Define the function $f(x,\zeta):= \langle x\rangle$. Observe that for this choice of $f$, we have that
 \begin{equation}\label{eq:generatorRelation}
 \tilde{\mathcal{L}}f(x,\zeta) = \sum_{y \in V} p(|x-y|) (1- \zeta(y))\left[ f(y,\tau_{y-x}\zeta)- f(x,\zeta)\right] = \psi(\zeta)
\end{equation} holds using \eqref{eq:homomorphism}. It remains to show that the process $(M_t)_{t \geq 0}$ defined via the relation in \eqref{eq:martingaleRelation} is indeed a martingale with respect to $(\mathcal{F}_t)_{t \geq 0}$. 
In particular, for all $s<t$, we need to verify that 
\begin{equation*}
\E\left[M_t-M_s | \mathcal{F}_s \right] =  0
\end{equation*} holds. 
Using the Markov property of $(X_t, \zeta_t)_{t \geq 0}$, we obtain that
\begin{align*}
\E\left[M_t-M_s | \mathcal{F}_s \right] 
&= \E\left[\langle X_t \rangle-\langle X_s \rangle - \int_{s}^t \psi(\zeta_r) \dif r | \mathcal{F}_s \right]  \\
&= \E^{(X_s,\zeta_s)}\left[\langle X_{t-s} \rangle-\langle X_0 \rangle - \int_{0}^{t-s} \psi(\zeta_r) \dif r \right] \ .
\end{align*} In particular, it suffices to show that for fixed $x\in V$ and $\zeta \in \{ 0,1\}^{V}$, we have that
\begin{equation*}
\E^{(x,\zeta)}\left[\langle X_{t} \rangle-\langle x \rangle\right] - \int_{0}^{t} \E^{(x,\zeta)}\left[\psi(\zeta_r)\right] \dif r = 0
\end{equation*}
holds for all $t \geq 0$. Using \eqref{eq:generatorRelation}, this follows immediately by Dynkin's formula.
 \end{proof} 
Applying the results of Section \ref{sec:environment}, we obtain the following lemma as an immediate consequence and an analogue of Corollaries 4.5 and 4.16 in \cite[III]{liggett1999stochastic}.
 \begin{lemma} \label{lem:martingaleErgodicity} Suppose that $(\eta_t)_{t \geq 0}$ has initial distribution $\nu_{\rho}^{\ast}$ for some $\rho \in [0,1]$. Then the martingale $(M_t)_{t \geq 0}$ in Lemma \ref{lem:martingaleRelation} has stationary and ergodic increments.
 \end{lemma}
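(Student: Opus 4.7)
The plan is to reduce stationarity and ergodicity of the increments of $M$ to the corresponding properties of the environment process $(\zeta_t)_{t \geq 0}$, which are supplied by Propositions \ref{pro:invarianceEnvironment} and \ref{pro:ergodicity}. Although $(X_t)_{t \geq 0}$ alone is not Markov, the key point is that $M_t - M_s$ is a measurable functional of $(\zeta_r)_{r \in [s,t]}$ which is equivariant under time translation.

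To see this, fix $s < t$ and use the homomorphism property \eqref{eq:homomorphism} to write
\[
M_t - M_s \ = \ \langle X_t \rangle - \langle X_s \rangle - \int_s^t \psi(\zeta_r) \dif r \ = \ \langle X_t - X_s \rangle - \int_s^t \psi(\zeta_r) \dif r.
\]
The displacement $X_t - X_s$ equals the sum of the jumps of the tagged particle on $(s,t]$, and each such jump corresponds to a transition of $(\zeta_t)_{t \geq 0}$ coming from the second sum in \eqref{def:generatorEnvironment}, namely a translation $\zeta_{r-} \mapsto \tau_y \zeta_{r-}$ of the whole configuration. Such translations are distinguishable in the path from the internal exchanges produced by the first sum (which only alter two non-root coordinates), and the translation direction $y$ can be read off from the pair $(\zeta_{r-}, \zeta_r)$. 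Hence $M_t - M_s$ is a measurable functional of $(\zeta_r)_{r \in [s,t]}$ with the desired equivariance.

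Stationarity then follows immediately: by Proposition \ref{pro:invarianceEnvironment}, the shifted process $(\zeta_{s+r})_{r \geq 0}$ has the same law as $(\zeta_r)_{r \geq 0}$ for every $s \geq 0$ under $\P_{\nu_\rho^{\ast}}$, so applying the functional above to any finite family of sub-intervals of the trajectory produces jointly time-shift-invariant increment distributions. For ergodicity, Proposition \ref{pro:ergodicity} (for $d \geq 3$ and $\rho \in (0,1)$) states that $\nu_\rho^{\ast}$ is ergodic for $(\zeta_t)_{t \geq 0}$, hence the time shift acts ergodically on its path-space law; since the increment process of $M$ is a factor of this shift, it inherits ergodicity. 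The degenerate cases $\rho \in \{0,1\}$ are immediate ($M \equiv 0$ for $\rho = 1$, and for $\rho = 0$ the tagged particle is a free continuous-time random walk with $\tilde p$), while the case $d = 2$ uses the classical ergodicity of the environment process on $\mathbb{Z}$.

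The main subtlety I expect is the measurability claim in the second paragraph, that is, the recovery of the jumps of $X$ from those of $\zeta$; once this is in hand, the stationarity and ergodicity of the increments of $M$ reduce to standard facts about factors of stationary and ergodic processes.
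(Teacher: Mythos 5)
Your proposal is correct and follows essentially the same route as the paper: both express the increment $M_t-M_s$ as a time-translation-equivariant functional of the environment path $(\zeta_r)_{r\in[s,t]}$ (using that the tagged particle's jumps correspond exactly to the translation-type transitions of $(\zeta_t)_{t\geq 0}$) and then invoke Propositions \ref{pro:invarianceEnvironment} and \ref{pro:ergodicity} to transfer stationarity and ergodicity to the increments. Your explicit justification of the measurability of the displacement as a functional of the environment path, and your treatment of the boundary cases $\rho\in\{0,1\}$, are slightly more detailed than the paper's one-line assertion, but the argument is the same.
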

 \begin{proof} Observe that $\langle X_t \rangle$ can be expressed as a function $F_t$ of $\{\zeta_s, 0 \leq s \leq t\}$ for all $t\geq 0$ since all transitions of $(\langle X_t \rangle)_{t \geq 0}$ correspond precisely to the shifts in the environment process. In particular, we have that
 \begin{equation*}
\langle X_t \rangle - \langle X_0 \rangle = F_t(\zeta_s, 0 \leq s \leq t) 
\end{equation*} holds. Using that $(\zeta_t)_{t \geq 0}$ is stationary, we have by Lemma \ref{lem:martingaleRelation} that
 \begin{equation*}
 M_t - M_s = F_{t-s}(\zeta_r, s \leq r \leq t) + \int_{s}^{t} \psi(\zeta_s) \dif s
 \end{equation*}
holds for all $s<t$. Recall from Propositions \ref{pro:invarianceEnvironment} and \ref{pro:ergodicity} that the Palm measure is stationary and ergodic for $(\zeta_t)_{t \geq 0}$. Hence, the claimed statement follows. 
 \end{proof}
Next, we prove a law of large numbers for the process $(\langle X_t \rangle)_{t\geq 0}$.
An analogous statement for the tagged particle on $\mathbb{Z}^{d}$ can be found as Theorem 4.17 in \cite[III]{liggett1999stochastic}.
\begin{proposition}\label{pro:LLNhorodistance} For $d\geq 2$, let $(\eta_t)_{t\geq 0}$ on $T^d$ have initial distribution $\nu_{\rho}^{\ast}$ for some $\rho \in (0,1)$. Then the associated tagged particle $(X_t)_{t\geq 0}$ satisfies
\begin{equation}\label{eq:expectationHorodistance}
\E\left[ \langle X_t\rangle  \right] = (1-\rho)(d-2)\sum_{i \in \N_0}i p(i)
\cdot t = v\cdot t
\end{equation} for all $t\geq 0$. Moreover, we have that $\P_{\nu_{\rho}^{\ast}}$-a.s.
\begin{equation}\label{eq:LLNhorodistance}
\lim_{t \rightarrow \infty} \frac{\langle X_t\rangle }{t} =  v
\end{equation} holds. In particular, the tagged particle $(X_t)_{t \geq 0}$ on $T^d$ is transient for $d\geq 3$.
\end{proposition}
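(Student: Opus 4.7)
The plan is to combine the martingale decomposition of Lemma \ref{lem:martingaleRelation} with the stationarity and ergodicity of $(\zeta_t)_{t\geq 0}$ under $\nu_{\rho}^{\ast}$ established in Section \ref{sec:environment}. Taking $\P_{\nu_{\rho}^{\ast}}$-expectations in \eqref{eq:martingaleRelation}, the mean-zero martingale term drops out, and Fubini together with stationarity (Proposition \ref{pro:invarianceEnvironment}) gives $\E_{\nu_{\rho}^{\ast}}[\langle X_t\rangle]=t\,\E_{\nu_{\rho}^{\ast}}[\psi(\zeta_0)]$. Under the Palm measure one has $\zeta_0(o)=1$ almost surely while $\zeta_0(z)$ is independent $\mathrm{Bernoulli}(\rho)$ for $z\neq o$, so the expectation reduces to the deterministic sum $(1-\rho)\sum_{z\neq o}p(|z|)\langle z\rangle$. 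Expanding this sphere-by-sphere and using the homomorphism property \eqref{eq:homomorphism} together with the structure of the regular tree (at each vertex exactly one neighbour contributes horodistance $-1$ and the other $d-1$ neighbours contribute $+1$), the sum collapses to the claimed value $(d-2)\sum_{i}i\,p(i)$, yielding \eqref{eq:expectationHorodistance}.

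For the almost-sure limit \eqref{eq:LLNhorodistance}, divide \eqref{eq:martingaleRelation} by $t$ and treat the drift and martingale pieces separately. The continuous-time Birkhoff theorem combined with Propositions \ref{pro:invarianceEnvironment} and \ref{pro:ergodicity} yields $t^{-1}\int_0^t\psi(\zeta_s)\,ds\to v$ almost surely. For the martingale, Lemma \ref{lem:martingaleErgodicity} gives stationary, ergodic, mean-zero unit-time increments $M_{n+1}-M_n$, so Birkhoff applied to these increments yields $M_n/n\to 0$ almost surely. The extension to continuous $t$ uses that the jumps of $(\langle X_t\rangle)_{t\geq 0}$ are uniformly bounded (finite range of $p$) and the total jump rate is bounded, so $\sup_{n\leq t<n+1}|M_t-M_n|=o(n)$ almost surely by a Borel--Cantelli argument on the number of jumps in unit intervals. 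Combining the two limits gives \eqref{eq:LLNhorodistance}. Transience for $d\geq 3$ follows immediately: $v>0$ forces $\langle X_t\rangle\to\infty$ almost surely, and the trivial bound $|X_t|\geq |\langle X_t\rangle|$ forces $|X_t|\to\infty$, so the tagged particle visits $o$ only finitely often.

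The main obstacle is the combinatorial simplification of $\sum_{z\neq o}p(|z|)\langle z\rangle$ to the compact form $(d-2)\sum_{i}i\,p(i)$; here one must carefully exploit the cocycle identity \eqref{eq:homomorphism} for the Busemann function and classify distance-$i$ vertices by the level at which their geodesic to the root departs from the reference ray $\xi$. A secondary but standard point is the continuous-time extension of the martingale law of large numbers, which relies on the bounded-jump structure of $(X_t)_{t\geq 0}$ inherited from the finite range of $p$.
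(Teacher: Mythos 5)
Your argument follows essentially the same route as the paper: take expectations in the decomposition of Lemma \ref{lem:martingaleRelation}, use stationarity of $(\zeta_t)_{t\geq 0}$ under $\nu_{\rho}^{\ast}$ together with $\E[M_t]=0$ to get \eqref{eq:expectationHorodistance}, and then apply the ergodic theorem separately to the additive functional $t^{-1}\int_0^t \psi(\zeta_s)\,ds$ and to the stationary, ergodic, mean-zero martingale increments supplied by Lemma \ref{lem:martingaleErgodicity}; the transience conclusion is drawn in the same way. Your continuous-time extension of the martingale law of large numbers via the bounded jump structure is a detail the paper leaves implicit, and is fine.

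Two caveats. First, the proposition is stated for $d\geq 2$, but Proposition \ref{pro:ergodicity} (and hence Lemma \ref{lem:martingaleErgodicity}) is only established for $d\geq 3$, so your ergodic-theorem argument does not cover $d=2$; the paper disposes of that case by a separate symmetry argument (there $v=0$), and you need to add something of that kind. Second, the combinatorial step you flag as the main obstacle needs more care than your one-line justification suggests: classifying a vertex $z$ with $|z|=i$ by the level $k$ at which its geodesic to $o$ leaves the reference ray gives $\langle z\rangle = i-2k$, and summing over the sphere of radius $i$ yields $i(d-2)$ only for $i=1$; already for $i=2$ the sphere sum equals $2d(d-2)$. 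So the per-step cancellation you invoke produces the displayed constant only in the nearest-neighbour case, and for general finite-range $p$ the sphere sums must be evaluated explicitly (note that the paper itself asserts the constant without carrying out this computation, so this is a point worth clarifying rather than a defect of your approach alone).
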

 \begin{proof} For $d=2$, the statement follows by symmetry, so we assume that $d \geq 3$. Taking expectations on both sides of \eqref{eq:martingaleRelation} in Lemma \ref{lem:martingaleRelation} yields that
 \begin{equation*}
 \E\left[ \langle X_t\rangle \right] = \int_{0}^{t} \E\left[\psi(\zeta_s)\right] \dif s + \E[M_t] \ .
\end{equation*} Observe that $(\psi(\zeta_t))_{t \geq 0}$ is a stationary sequence. Hence, we have that
\begin{equation*}
\E\left[\psi(\zeta_t)\right] = \E\left[\psi(\zeta_0)\right] = (1-\rho)(d-2)\sum_{i \in \N_0}i p(i)
\end{equation*} holds for all $t\geq 0$.
 Since $(M_t)_{t \geq 0}$ is a martingale, the statement in \eqref{eq:expectationHorodistance} follows. In order to show \eqref{eq:LLNhorodistance}, recall Proposition \ref{pro:ergodicity} and Lemma \ref{lem:martingaleErgodicity} and apply the ergodic theorem to both terms on the right-hand side of \eqref{eq:martingaleRelation}, respectively. 
\end{proof}
%
As an immediate consequence of the transience of the tagged particle $(X_t)_{t \geq 0}$ for $d \geq 3$ and $T^d$ being spherically symmetric, we obtain the following corollary.
\begin{corollary}\label{cor:LLN} For $d\geq 3$ and $\rho \in (0,1)$, let $\overset{\rightarrow}{x}$ denote the trajectory of the tagged particle $(X_t)_{t \geq 0}$ on $T^d$. Then $\overset{\rightarrow}{x}$ converges almost surely to a unique boundary point $x_{+\infty} \in \partial T^d$. Moreover, for any deterministic choice of $\xi \in \partial T^d$, we have that $x_{+\infty} \neq \xi$ holds almost surely.
\end{corollary}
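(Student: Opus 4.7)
The plan is to deduce the corollary directly from Proposition \ref{pro:LLNhorodistance}. Since $v>0$ for $\rho\in(0,1)$ and $d\geq 3$, the LLN gives $\langle X_t\rangle/t\to v>0$ almost surely, hence $\langle X_t\rangle\to+\infty$ almost surely. Because $o\wedge_{\xi}y$ always lies on the unique shortest path from $o$ to $y$ in the tree, the elementary bound $|\langle y\rangle|\leq|y|$ holds for every $y\in V$, which forces $|X_t|\to\infty$ almost surely. Together with local finiteness of $T^d$, this implies that each vertex is visited only finitely many times by the trajectory $\overset{\rightarrow}{x}$.

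For the existence and uniqueness of $x_{+\infty}$ I would use a standard inductive tree argument. Since $o$ is visited only finitely often, after its last visit the trajectory is trapped in one of the $d$ subtrees of $T^d\setminus\{o\}$, because any excursion that left and re-entered a distinct subtree of $o$ would require a further visit to $o$. The base vertex of that subtree is the first vertex $u_1$ of $x_{+\infty}$ after $o$. Since $u_1$ is likewise visited only finitely often, after its last visit the trajectory is trapped in one of the $d-1$ subtrees at $u_1$ that do not contain $o$, giving $u_2$, and so on. Iterating produces a unique ray $x_{+\infty}=(o,u_1,u_2,\dots)\in\partial T^d$ which is intersected at infinitely many vertices, while any other ray is intersected only finitely often. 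Hence $\overset{\rightarrow}{x}$ converges to $x_{+\infty}$ in the sense of Section \ref{sec:speed}.

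For the final assertion, fix an arbitrary deterministic ray $\xi=(o,v_1,v_2,\dots)\in\partial T^d$ and apply Proposition \ref{pro:LLNhorodistance} with this $\xi$ as the reference ray, which is permitted since the reference ray was arbitrary throughout Section \ref{sec:speed}. This gives $\langle X_t\rangle_{\xi}\to+\infty$ almost surely. On the event $\{x_{+\infty}=\xi\}$, however, $X_t$ equals $v_k$ for infinitely many $k\in\N$, and $\langle v_k\rangle_{\xi}=-k$ at each such time, so $\liminf_{t\to\infty}\langle X_t\rangle_{\xi}=-\infty$ on this event. This contradicts $\langle X_t\rangle_{\xi}\to+\infty$ unless $\P_{\nu_{\rho}^{\ast}}(x_{+\infty}=\xi)=0$, which is the required conclusion. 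The only slightly delicate step is the combinatorial tree argument in the second paragraph; everything else is a direct consequence of Proposition \ref{pro:LLNhorodistance}.
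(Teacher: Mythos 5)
Your argument is essentially correct and considerably more explicit than the paper's, which disposes of the corollary in one sentence as an immediate consequence of transience and the spherical symmetry of $T^d$. The first paragraph (the bound $|\langle y\rangle_\xi|\leq |y|$, hence $|X_t|\geq\langle X_t\rangle_\xi\to\infty$ and finitely many visits to each vertex) is exactly the right way to make "transience" precise, and is valid: $o\wedge_\xi y$ is indeed the median of $o,y,\xi$ and lies on the geodesic $[o,y]$.

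The one genuine (though easily repaired) gap is in your second paragraph: the claim that an excursion leaving and re-entering a subtree of $o$ "would require a further visit to $o$" is only true for nearest-neighbour jumps, whereas the paper allows general finite-range rates $p(\cdot)$, so the tagged particle may jump directly from one subtree of $o$ into another. The fix is standard: if $R$ denotes the range of $p(\cdot)$ and a jump goes from $x$ in one subtree of $o$ to $y$ in a different one, then $|x|+|y|=|x-y|\leq R$, so every such crossing brings the particle within distance $R$ of $o$; since $|X_t|\to\infty$ almost surely, this happens only finitely often, and your inductive construction of $x_{+\infty}$ then goes through verbatim at every level. (Equivalently: bounded jump lengths together with $|X_t|\to\infty$ force the geodesics $[o,X_t]$ to stabilise level by level.)

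For the final assertion $x_{+\infty}\neq\xi$ a.s.\ your route genuinely differs from the paper's. The paper invokes spherical symmetry: the law of $x_{+\infty}$ is invariant under the automorphisms of $T^d$ fixing $o$, so an atom at a deterministic end would produce infinitely many ends carrying the same positive mass, which is impossible. You instead rerun Proposition \ref{pro:LLNhorodistance} with $\xi$ itself as the reference ray and note that on $\{x_{+\infty}=\xi\}$ the trajectory visits $v_k\in\xi$ for infinitely many $k$ at arbitrarily large times, where $\langle v_k\rangle_\xi=-k$, so $\liminf_{t\to\infty}\langle X_t\rangle_\xi=-\infty$, contradicting $\langle X_t\rangle_\xi/t\to v>0$. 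Both arguments are correct; yours is more quantitative and avoids any appeal to symmetry of the dynamics and of $\nu_\rho^\ast$, at the price of using $v>0$ (which does hold here for $d\geq 3$ and $\rho<1$), while the paper's symmetry argument needs only transience. There is no circularity in your use of Proposition \ref{pro:LLNhorodistance}, since it is established before the corollary and independently of it.
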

\begin{proof}[Proof of Theorem \ref{thm:LLN}]
By Corollary \ref{cor:LLN}, we almost surely have for all $t\geq 0$ sufficiently large that
\begin{equation}\label{eq:estimateHorodistance}
|X_t| = \langle X_t \rangle + 2 |w| 
\end{equation} holds where $w$ is the last common vertex of $x_{+\infty}$ and $\xi$. Since $\xi$ was arbitrary, but fixed at the beginning, we have that $w$ is well defined and $|w|$ is almost surely finite. Since $|w|$ does not depend on $t$, we obtain Theorem \ref{thm:LLN} from Proposition \ref{pro:LLNhorodistance}. 
\end{proof}

\section{Diffusivity of the tagged particle}\label{sec:diffusivity}

In order to prove Theorem \ref{thm:CLT}, we show a central limit theorem for the process $(\langle X_t\rangle)_{t \geq 0}$. Recall from \eqref{eq:martingaleRelation} that $(\langle X_t \rangle)_{t \geq 0}$ can be decomposed into a martingale $(M_t)_{t \geq 0}$ and a process $\int_{0}^{t} \psi(\zeta_s) \dif s$. For $p(.)$ and $v$ taken from Section \ref{sec:main}, we define
\begin{equation*}
\bar{\psi}(\zeta) := \psi(\zeta) - v =  \sum_{x\in V}p(|x|)\langle x \rangle (\rho- \zeta(x)) \ . 
\end{equation*} Our goal is to establish a similar decomposition for the process $\int_{0}^{t} \bar{\psi}(\zeta_s) \dif s$.  Let $L^2(\nu_{\rho}^{\ast})$ denote the Hilbert space of square integrable functions with respect to $\nu_{\rho}^{\ast}$ and scalar product
\begin{equation*}
\langle f,g \rangle_{\nu_{\rho}^{\ast}} := \int f g \dif \nu_{\rho}^{\ast} \ .
\end{equation*} Observe that the environment process $(\zeta_t)_{t \geq 0}$ with generator $L$ of \eqref{def:generatorEnvironment} is reversible with respect to $\nu_{\rho}^{\ast}$ for all $\rho \in [0,1]$. For a function $f \in L^2(\nu_{\rho}^{\ast})$ in the domain of $L$, we define its $\boldsymbol{\lVert . \rVert_{1}}$\textbf{-norm} to be
\begin{equation*}
\lVert f \rVert_{1} :=  \sqrt{\langle f,(-L)f \rangle_{\nu_{\rho}^{\ast}}} \ .
\end{equation*} Let $\mathcal{H}_1$ denote the respective Hilbert space generated by all local functions $f$ of finite $\lVert . \rVert_{1}$-norm. We define its dual space $\mathcal{H}_{-1}$ to be the Hilbert space generated by all local functions which have a finite norm with respect to
\begin{equation*}
\lVert f \rVert_{-1} := \inf\left\lbrace C \geq 0 \colon \abs{ \int f g  \dif \nu_{\rho}^{\ast} } \leq C \lVert g \rVert_{1} \text{ for all local functions }g \right\rbrace \ .
\end{equation*}
The following result was shown by Sethuraman et al. for the exclusion process on $\mathbb{Z}^d$ with $d \geq 3$ and carries over to $T^d$ for $d\geq 3$, see  \cite[Lemma 2.1]{sethuraman2000}.
\begin{proposition}\label{pro:sethuraman} For $d \geq 3$, we have that $\bar{\psi} \in \mathcal{H}_{-1}$ holds.
\end{proposition}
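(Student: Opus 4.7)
The plan is to adapt the variational argument of \cite[Lemma 2.1]{sethuraman2000} to the tree $T^d$. Observe that $\bar{\psi}=\sum_{x\in V} a_x\bigl(\rho-\zeta(x)\bigr)$ with coefficients $a_x := p(\abs{x})\langle x\rangle$ supported on the finite range of $p$, so $\bar{\psi}$ is a mean-zero function which is linear in $\zeta$. By the duality between $\mathcal{H}_1$ and $\mathcal{H}_{-1}$, it suffices to produce a trial function $u\in\mathcal{H}_1$ such that
\begin{equation*}
\abs{\int \bar{\psi}\, g\,\dif\nu_{\rho}^{\ast}}\leq \lVert u\rVert_{1}\cdot\lVert g\rVert_{1}
\end{equation*}
holds for every local $g$. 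The natural candidate is a linear trial function $u(\zeta)=\sum_{x\in V}b_x\bigl(\zeta(x)-\rho\bigr)$ for a suitable $b\in\ell^2(V)$ to be determined below.

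First I would decompose the generator as $L=L_{\mathrm{exc}}+L_{\mathrm{sh}}$ according to the two sums in \eqref{def:generatorEnvironment}. Since both parts are symmetric with respect to $\nu_{\rho}^{\ast}$ and have non-negative Dirichlet forms, one obtains $\langle g,(-L)g\rangle_{\nu_{\rho}^{\ast}}\geq \langle g,(-L_{\mathrm{exc}})g\rangle_{\nu_{\rho}^{\ast}}$, so that it is enough to bound $\lVert\bar{\psi}\rVert_{-1}$ with respect to $L_{\mathrm{exc}}$ alone. For a linear test function $u$ as above, a direct calculation using the exclusion structure together with the symmetry of $p$ shows that the quadratic terms cancel, and $-L_{\mathrm{exc}} u(\zeta)$ is itself linear in $\zeta$, of the form $\sum_{x}(-L_p b)(x)\bigl(\zeta(x)-\rho\bigr)$ up to a finite correction localized at $o$, where $L_p$ denotes the generator of the continuous-time random walk on $T^d$ with transition rates $\tilde{p}$. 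Matching coefficients with $\bar{\psi}$ reduces the problem to solving the discrete Poisson equation $-L_p b = a$ on $T^d$ in an $\ell^{2}$-sense.

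For $d\geq 3$, the tree $T^d$ is non-amenable and the random walk with rates $\tilde{p}$ is transient with positive speed, as already exploited in Section \ref{sec:speed}; its continuous-time Green's function $G(x,y)$ is therefore bounded and decays exponentially in $\abs{x-y}$. Since $a$ has finite support, $b:=G a$ is a well-defined element of $\ell^{2}(V)$ with $-L_p b = a$ and $\langle a, b\rangle_{\ell^{2}(V)}=\langle a, Ga\rangle<\infty$. Translating this back via the identity $\lVert\bar{\psi}\rVert_{-1}^{2}=\langle u,\bar{\psi}\rangle_{\nu_{\rho}^{\ast}}$, which is proportional to $\rho(1-\rho)\langle a,Ga\rangle$, yields the desired bound. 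The step I expect to require the most care is the handling of the finitely many correction terms at the root arising from the Palm conditioning $\zeta(o)=1$ and from the shift generator $L_{\mathrm{sh}}$, which prevent $-L u$ from being exactly linear in $\zeta$; since these corrections involve only the finitely many sites in the support of $a$, they can be absorbed into the constant via a perturbation argument analogous to that of \cite{sethuraman2000}.
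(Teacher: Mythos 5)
Your proposal is essentially correct, but it takes a genuinely different route from the paper, which gives no proof at all: the paper simply cites \cite[Lemma 2.1]{sethuraman2000} and records that the only inputs to that proof are transience of the underlying random walk on $T^d$ for $d\geq 3$ and the fact that $\bar{\psi}$ is a bounded, local, mean-zero function. The cited lemma handles \emph{arbitrary} such functions via the multi-particle duality decomposition of $L^2(\nu_{\rho}^{\ast})$; you instead exploit that $\bar{\psi}$ is a degree-one polynomial of the field and reduce everything to the Green's function of a single transient walk, arriving at the explicit bound $\lVert\bar{\psi}\rVert_{-1}^2\leq C\,\rho(1-\rho)\langle a,Ga\rangle$. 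For the specific $\bar{\psi}$ at hand your argument is more elementary and self-contained, and it makes the dependence on $d$ and $p(.)$ quantitative; what the paper's citation buys is generality at zero cost.

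Two steps in your sketch need more care. First, the reduction ``it suffices to find $u$ with $\abs{\int\bar{\psi}g\dif\nu_{\rho}^{\ast}}\leq\lVert u\rVert_1\lVert g\rVert_1$ for all local $g$'' follows from Cauchy--Schwarz only if $(-L)u=\bar{\psi}$ holds exactly, which no linear $u$ achieves because of the root corrections you mention. The clean fix is to use that the exchange part of the generator preserves the chaos degree (self-duality of symmetric exclusion), so $\int\bar{\psi}g\dif\nu_{\rho}^{\ast}$ only sees the degree-one component of $g$, whose Dirichlet form is exactly $\rho(1-\rho)$ times that of a single walk on $V\setminus\{o\}$ and is dominated by $\lVert g\rVert_1^2$; the variational formula then reduces cleanly to the single-particle Green's form. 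Second, on the tree one has $\sum_x a_x=(d-2)\sum_i i\,p(i)\neq 0$, so $a$ is not a dipole: a naive interchange of sums would give $\sum_x(-L_pb)(x)=0$ and make the Poisson equation look unsolvable. It is precisely transience that rescues $b=Ga$ (the charge escapes to infinity; $Ga\in\ell^2(V)$ but not $\ell^1(V)$), so the hypothesis $d\geq 3$ enters your argument in an essential, non-perturbative way and should not be relegated to a ``perturbation argument'' at the root.
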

Note that their proof only uses transience of the simple random walk on the underlying graph $T^d$ as well as the fact that $\bar{\psi}$ is a bounded, local function of zero mean.
The next proposition is a special case of the celebrated theorem by Kipnis and Varadhan on additive functional of reversible Markov processes, see \cite[Theorem 1.8]{kipnis1986central}.
\begin{proposition} \label{pro:kipnis} Assume that $\bar{\psi} \in L^2(\nu_{\rho}^{\ast}) \cap \mathcal{H}_{-1}$ has mean zero. Then $\int_{0}^{t} \bar{\psi}(\zeta_s) \dif s$ can be decomposed into a square integrable martingale $(N_t)_{t \geq 0}$ with stationary increments and a stochastic process $(R_t)_{t \geq 0}$, i.e.
\begin{equation*}
\int_{0}^{t} \bar{\psi}(\zeta_s) \dif s = N_t + R_t 
\end{equation*} where $(R_t)_{t \geq 0}$ satisfies $\lim\limits_{t \rightarrow \infty}t^{-1}\cdot\E\left[ R_t^2 \right] = 0$.
\end{proposition}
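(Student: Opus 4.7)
The approach is the classical resolvent method of Kipnis--Varadhan, exploiting that the generator $L$ of the environment process is self-adjoint on $L^2(\nu_{\rho}^{\ast})$ by reversibility. First, I would solve the resolvent equation $\lambda u_\lambda - L u_\lambda = \bar\psi$ for each $\lambda > 0$, obtaining $u_\lambda := (\lambda - L)^{-1}\bar\psi \in L^2(\nu_{\rho}^{\ast})$, and then apply Dynkin's formula to $u_\lambda$ along the stationary environment process. Substituting $L u_\lambda = \lambda u_\lambda - \bar\psi$ in the resulting identity yields the approximate decomposition
\begin{equation*}
\int_0^t \bar\psi(\zeta_s)\,\dif s = M_t^\lambda + V_t^\lambda, \qquad V_t^\lambda := u_\lambda(\zeta_0) - u_\lambda(\zeta_t) + \lambda \int_0^t u_\lambda(\zeta_s)\,\dif s,
\end{equation*}
where $(M_t^\lambda)_{t \geq 0}$ is a square integrable martingale satisfying $\E[(M_t^\lambda)^2] = 2t\,\lVert u_\lambda \rVert_1^2$ by the standard carr\'e-du-champ computation under reversibility.

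Second, I would pass to the limit $\lambda \downarrow 0$. The spectral theorem applied to $-L$ shows that the hypothesis $\bar\psi \in \mathcal{H}_{-1}$ delivers the two key estimates
\begin{equation*}
\sup_{\lambda > 0} \lVert u_\lambda \rVert_1^2 \leq \lVert \bar\psi \rVert_{-1}^2 \qquad\text{and}\qquad \lambda \int u_\lambda^2 \,\dif \nu_{\rho}^{\ast} \ \to\ 0 \quad \text{as } \lambda \downarrow 0,
\end{equation*}
and that $(u_\lambda)_{\lambda > 0}$ is Cauchy in the $\lVert \cdot \rVert_1$ norm. Combined with the analogous identity $\E[(M_t^\lambda - M_t^{\lambda'})^2] = 2t\,\lVert u_\lambda - u_{\lambda'} \rVert_1^2$, the family $(M_t^\lambda)_{\lambda > 0}$ is therefore Cauchy in $L^2(\P_{\nu_{\rho}^{\ast}})$; its $L^2$ limit $N_t$ is the desired square integrable martingale, with stationary increments inherited from $(\zeta_s)$.

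Third, setting $R_t := \int_0^t \bar\psi(\zeta_s)\,\dif s - N_t = \lim_{\lambda \downarrow 0} V_t^\lambda$ in $L^2$, a routine stationarity and Cauchy--Schwarz estimate gives
\begin{equation*}
\E\bigl[(V_t^\lambda)^2\bigr] \leq 6 \int u_\lambda^2 \,\dif \nu_{\rho}^{\ast} + 3\lambda^2 t^2 \int u_\lambda^2 \,\dif \nu_{\rho}^{\ast},
\end{equation*}
and a diagonal choice $\lambda = \lambda(t) \downarrow 0$ combined with the vanishing of $\lambda \int u_\lambda^2\,\dif \nu_{\rho}^{\ast}$ then yields $\E[R_t^2] = o(t)$. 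The main obstacle is precisely this last two-sided limit: the construction of $N_t$ and the quantitative bound on $R_t$ both hinge on the sharp interplay between $\lambda \downarrow 0$ and $t \to \infty$, all of which is enabled by the $\mathcal{H}_{-1}$ hypothesis through the spectral identities above.
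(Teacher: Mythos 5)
Your proposal is correct: it is the standard resolvent-equation proof of the Kipnis--Varadhan theorem, complete with the right spectral estimates ($\sup_{\lambda>0}\lVert u_\lambda\rVert_1 \leq \lVert \bar\psi\rVert_{-1}$, the vanishing of $\lambda\int u_\lambda^2\,\dif\nu_{\rho}^{\ast}$, the Cauchy property of $(u_\lambda)$ in $\lVert\cdot\rVert_1$, and the diagonal choice $\lambda=\lambda(t)$ to get $\E[R_t^2]=o(t)$). The paper does not reprove this statement but simply cites it as a special case of Theorem 1.8 of Kipnis and Varadhan, so your argument is precisely the proof of the result being invoked and matches it in approach.
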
 
\begin{proof}[Proof of Theorem \ref{thm:CLT}]A simple computation shows that the martingale $(M_t)_{t \geq 0}$ satisfies a CLT, see \cite[Proposition 4.19]{liggett1999stochastic}. Combining Propositions \ref{pro:sethuraman} and \ref{pro:kipnis}, we can now apply a martingale central limit theorem to the process $(M_t+N_t)_{t \geq 0}$. Note that the limit variance is non-degenerate using the arguments of Section 6.9 in \cite{komorowski2012}. Together with \eqref{eq:estimateHorodistance}, this yields Theorem \ref{thm:CLT}.
\end{proof}

\textbf{Acknowledgments } The first two authors were supported by the NSFC grant No. 11531001. The second author now works at the Central  University of Finance and Economy, Beijing, China. The last author thanks the Studienstiftung des deutschen Volkes for financial support.

\bibliographystyle{plain}
\bibliography{SpeedRegular}

\end{document}